\setlist[enumerate]{label = (\alph*), ref=(\text{\alph*)}}
\setlist[itemize]{nolistsep}
\renewcommand{\phi}{\varphi}
\renewcommand{\ge}{\geqslant}
\renewcommand{\le}{\leqslant}
\newcommand{\FF}{\mathbb{F}}
\newcommand{\RR}{\mathbb{R}}
\newcommand{\KK}{\mathbb{K}}
\renewcommand{\AA}{\mathbb{A}}
\newcommand{\ZZ}{\mathbb{Z}}
\newcommand{\GG}{\mathbb{G}}
\newcommand{\PP}{\mathbb{P}}
\def\Xf{{\mathcal{X}}}
\def\Zf{{\mathcal{Z}}}
\newcommand{\fX}{\mathfrak{X}}
\newcommand{\NN}{\mathbb{Z}_{>0}}
\newcommand{\Zgezero}{\mathbb{Z}_{\geqslant 0}}
\newcommand{\pa}{\partial}
\newcommand{\Xreg}{X^{\mathrm{reg}}}
\newcommand{\yu}{{\bf y}}
\DeclareMathOperator{\SL}{SL}
\DeclareMathOperator{\Aut}{Aut}
\DeclareMathOperator{\SAut}{SAut}
\DeclareMathOperator{\rk}{rk}
\DeclareMathOperator{\Pic}{Pic}
\DeclareMathOperator{\Spec}{Spec}
\DeclareMathOperator{\diag}{diag}
\DeclareMathOperator{\Susp}{Susp}
\theoremstyle{plain}
\newtheorem{lemma}{Lemma}
\newtheorem{proposition}{Proposition}
\newtheorem{theorem}{Theorem}
\newtheorem{corollary}{Corollary}
\theoremstyle{definition}
\newtheorem{definition}{Definition}
\newtheorem{example}{Example}
\theoremstyle{remark}
\newtheorem{remark}{Remark}
\begin{document}

\title[Affine homogeneous varieties and suspensions]{Affine homogeneous varieties and suspensions}
\author{Ivan Arzhantsev}
\address{HSE University, Faculty of Computer Science, Pokrovsky Boulvard 11, Moscow, 109028 Russia}
\email{arjantsev@hse.ru}

\author{Yulia Zaitseva}
\address{HSE University, Faculty of Computer Science, Pokrovsky Boulvard 11, Moscow, 109028 Russia}
\email{yuliazaitseva@gmail.com}

\thanks{This research was supported by the Ministry of Science and Higher Education of the Russian Federation, agreement 075-15-2022-289 date 06/04/2022}

\subjclass[2010]{Primary 14M17, 14R20; \ Secondary 14J50, 14L30}

\keywords{Affine algebraic variety, homogeneous space, automorphism group, transitivity, suspension, Danielewski surface, Picard group}

\begin{abstract}
An algebraic variety $X$ is called a homogeneous variety if the automorphism group $\Aut(X)$ acts on~$X$ transitively, and a homogeneous space if there exists a transitive action of an algebraic group on $X$. We prove a criterion of smoothness of a suspension to construct a wide class of homogeneous varieties. As an application, we give criteria for a Danielewski surface to be a homogeneous variety and a homogeneous space. Also, we construct affine suspensions of arbitrary dimension that are homogeneous varieties but not homogeneous spaces. 
\end{abstract}

\maketitle

\section{Introduction}

Let $X$ be an algebraic variety over an algebraically closed field~$\KK$ of characteristic zero and $G$ be a group acting on~$X$. Recall that the action of $G$ on~$X$ is \emph{transitive} if for any points $x, y \in X$ there is an element $g \in G$ such that $gx=y$. The variety $X$ is called a \emph{homogeneous variety} if the automorphism group $\Aut(X)$ acts on $X$ transitively. The variety $X$ is called a \emph{homogeneous space} if there exists a transitive action of an algebraic group $G$ on~$X$. In this case $X$ can be identified with the variety of left cosets $G/H$, where $H$ is the stabilizer in~$G$ of a point in~$X$. Homogeneous spaces are classical mathematical objects with rich structural theory and many applications; see e.g.~\cite{Br2017, BSU2013, Gr1997, Hu1975, OV, PV1994, Ti2011}. 

Clearly, any homogeneous space is a homogeneous variety. In general case $\Aut(X)$ is not an algebraic group. Moreover, there exist homogeneous varieties that are not homogeneous spaces. Any smooth quasi-affine toric variety is homogeneous, see~\cite[Theorem~0.2(2)]{AKZ2012} and~\cite[Theorem~4.3(a)]{ArShZa2022}. In~\cite[Example~2.2]{AKZ2012}, an example of a quasi-affine toric variety that is a homogeneous variety but not a homogeneous space is given, and in~\cite{ArShZa2022} the question is raised whether there exists an example in the class of affine varieties. There is no such affine toric variety since any homogeneous variety is smooth, and any smooth affine toric variety is isomorphic to the direct product $(\KK)^s \times (\KK^\times)^r$, $s, r \in \Zgezero$, which is a homogeneous space. In this paper we give series of affine surfaces and higher-dimensional varieties that are homogeneous varieties but not homogeneous spaces. For this purpose we use the construction of suspension. 

In Section~\ref{susp_sec} we notice that any smooth suspension over a flexible affine variety is a homogeneous variety and study when a suspension is smooth. Namely, the suspension $\Susp(Y, f) = \{uv = f(y)\} \subseteq \AA^2 \times Y$ over an affine variety $Y$ with $f \in \KK[Y]\setminus\KK$ is proved to be smooth if and only if the variety $Y$ and the scheme $\Spec \KK[Y]/(f)$ are smooth. This gives a criterion of smoothness of iterated suspensions and a construction of a class of homogeneous varieties. 

In Section~\ref{dim2_sec} we prove that the Danielewski surface $xz^n = f(y)$ is a homogeneous variety but not a homogeneous space if and only if $n=1$, the polynomial~$f$ has no multiple roots and $\deg f \ge 3$. The condition $n=1$ means that the Danielewski surface is a suspension over the affine line. 

We conclude Section~\ref{dim2_sec} with a discussion of Gizatullin and Danilov-Gizatullin surfaces. This leads to one more infinite series of affine homogeneous surfaces that are not homogeneous spaces.

In Section~\ref{dimn_sec} we obtain an upper bound on the rank of the Picard group of an affine homogeneous space and calculate the Picard group of some smooth affine suspensions. This allows to provide a family of affine varieties of arbitrary dimension that are homogeneous varieties but not homogeneous spaces. 

The work was carried out within the contest of mathematical projects of the Euler International Mathematical Institute. 

\section{Regular suspensions}
\label{susp_sec}

Let $\KK$ be an algebraically closed field of characteristic zero and $\GG_\mathrm{a} = (\KK, +)$ be the additive group of the ground field~$\KK$. Let $\GG_\mathrm{a} \times X \to X$ be a regular action on an algebraic variety $X$. The corresponding subgroup of the automorphism group $\Aut(X)$ is called a \emph{$\GG_\mathrm{a}$-subgroup} in $\Aut(X)$. By the \emph{special automorphism group} $\SAut(X)$ of a variety $X$ we call the subgroup of the automorphism group $\Aut(X)$ generated by all $\GG_\mathrm{a}$-subgroups in~$\Aut(X)$. Denote by $\Xreg$ the set of smooth points in~$X$. 

\begin{definition}
A smooth point $x$ of a variety $X$ is called \emph{flexible} if the tangent space to~$X$ at the point~$x$ is generated by tangents to orbits of $\GG_\mathrm{a}$-subgroups passing through the point~$x$. A variety $X$ is called \emph{flexible} if any smooth point $x \in \Xreg$ is flexible. 
\end{definition}

In \cite[Theorem 0.1]{AFKKZ12013}, it is proved that the following conditions are equivalent for an irreducible affine variety $X$: 
\begin{enumerate}
  \item the variety $X$ is flexible;
  \item the group $\SAut(X)$ acts on $\Xreg$ transitively.
\end{enumerate}
Moreover, if the variety $X$ has dimension at least~$2$, then these conditions are equivalent to 
\begin{enumerate}
  \item[(c)] the group $\SAut(X)$ acts on $\Xreg$ infinitely transitive.
\end{enumerate}
Recall that an action of a group $G$ on a set $S$ is called infinitely transitive if it is $m$-transitive for any $m \in \NN$, i.e. for any pairwise distinct points $x_1, \ldots, x_m \in S$ and any pairwise distinct points $y_1, \ldots, y_m \in S$ there exists an element $g \in G$ such that $gx_i = y_i$ for all $1 \le i \le m$. 

Now we recall the notion of suspension. In the context of automorphism groups suspensions were considered for the first time in~\cite{KaZa1999}. 

\begin{definition}
Let $Y$ be an affine variety and $f \in \KK[Y]$ be a nonconstant regular function on~$Y$. Then the hypersurface $\Susp(Y, f)$ that is given in the direct product $\AA^2 \times Y$ by the equation $uv = f(y)$, where $y \in Y$ and $\AA^2 = \Spec \KK[u,v]$, is called a \emph{suspension} over $Y$.
\end{definition}

We are interested in suspensions since this construction produces flexible varieties from flexible varieties~\cite{AKZ2012}; for the case of the ground field~$\RR$, see~\cite{KuMa2012}. Let us formulate the corresponding result. 

\begin{theorem}[{\cite[Theorem 0.2(3)]{AKZ2012}}]
\label{AKZ2012_023theor}
Suppose that an irreducible affine variety $X$ of positive dimension is flexible. Then any suspension over~$X$ is flexible as well. 
\end{theorem}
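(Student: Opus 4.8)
The plan is to prove Theorem \ref{AKZ2012_023theor}: if $X$ is a flexible irreducible affine variety of positive dimension, then any suspension $\Susp(X,f) = \{uv = f(x)\} \subseteq \AA^2 \times X$ is flexible as well. By the equivalence of flexibility with transitivity of $\SAut$ on smooth points (stated in the excerpt from \cite{AFKKZ12013}), it suffices to produce enough $\GG_\mathrm{a}$-subgroups of $\Aut(\Susp(X,f))$ so that their orbit tangents span the tangent space at every smooth point, or equivalently so that $\SAut$ acts transitively on the smooth locus. First I would set $W = \Susp(X,f)$ and write its coordinate ring as $\KK[W] = \KK[X][u,v]/(uv - f)$. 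The goal is to exhibit explicit locally nilpotent derivations (LNDs) of $\KK[W]$, since a $\GG_\mathrm{a}$-action on an affine variety is the same as an LND of its coordinate ring, and the orbit tangent directions are precisely the values of these derivations.

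The key construction step is to build three natural families of LNDs. First, the two ``vertical'' replica derivations coming from the two ways of moving along the fibers of $uv=f$: informally, a derivation sending $v \mapsto u$-direction and fixing $u, x$, i.e. $\partial_1$ determined by $\partial_1(u)=0$, $\partial_1(v)=\text{(something)}$, adjusted so that it annihilates $uv-f$; and symmetrically $\partial_2$ with the roles of $u$ and $v$ swapped. Concretely one takes $\partial_1 = u\,\partial/\partial v$-type and $\partial_2 = v\,\partial/\partial u$-type derivations extended by zero on $\KK[X]$; these are locally nilpotent and generate translations in the $u$ and $v$ coordinates along fibers. Second, and this is where the flexibility of $X$ enters, I would lift LNDs of $\KK[X]$ to $\KK[W]$. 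Given an LND $\delta$ of $\KK[X]$, the naive extension by $\delta(u)=\delta(v)=0$ does not preserve the relation unless $\delta(f)=0$. The standard fix, which is the heart of the suspension technique, is to multiply by $u$ (or $v$): define $\widetilde\delta$ on generators by $\widetilde\delta(x) = u\,\delta(x)$ for $x \in \KK[X]$, $\widetilde\delta(u)=0$, $\widetilde\delta(v) = \delta(f)$; one checks $\widetilde\delta(uv-f) = u\,\delta(f) - u\,\delta(f) = 0$, so this descends to $\KK[W]$, and it is locally nilpotent because $u$ is in its kernel and $\delta$ is locally nilpotent. Symmetrically one obtains $\widehat\delta$ with $u$ replaced by $v$.

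With these derivations in hand the transitivity argument proceeds as follows. The vertical derivations $\partial_1, \partial_2$ already act transitively on each fiber $\{x\} \times \{uv=f(x)\}$ over a point $x \in X$ with $f(x) \neq 0$, since the hyperbola $uv = c$ with $c \neq 0$ is a single $\SAut$-orbit (indeed it is $\KK^\times$, homogeneous under the two $\GG_\mathrm{a}$'s). The lifted derivations $\widetilde\delta, \widehat\delta$, as $\delta$ ranges over a generating set of LNDs of $X$ witnessing its flexibility, move the $X$-coordinate: applied at a point where $u \neq 0$ (resp. $v \neq 0$) they reproduce the full $X$-direction tangent spans of the flexible variety $X$. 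So the plan is to verify that at a smooth point $w=(u_0,v_0,x_0) \in W^{\mathrm{reg}}$ the tangent directions produced by $\partial_1, \partial_2, \widetilde\delta, \widehat\delta$ together span $T_w W$: the vertical pair handles the $(u,v)$-directions tangent to the fiber, and since at any point of $W$ at least one of $u_0, v_0$ is nonzero whenever $f(x_0)\neq 0$ (and one handles the locus $f(x_0)=0$ separately), the appropriate lifted family spans the remaining directions coming from $T_{x_0}X$ via flexibility of $X$.

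The main obstacle I anticipate is the locus where $f$ vanishes, i.e. the fiber over $\{f=0\} \subseteq X$, where $u_0 = v_0 = 0$ forces both vertical derivations to vanish at the point and the lifted derivations $\widetilde\delta, \widehat\delta$ also degenerate since they carry factors of $u$ or $v$. Handling the tangent directions there — and checking that a smooth point of $W$ lying over $\{f=0\}$ still has its full tangent space spanned — is the delicate part; one resolves it by combining derivations, e.g. using that on the smooth locus the differential of $uv-f$ is nonzero, so not all of $u, v, df$ vanish simultaneously, and by taking commutators or suitable $\KK[X]$-linear combinations of the basic LNDs to reach into the $X$-directions even when $u_0=v_0=0$. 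Making this spanning argument uniform over all smooth points, rather than just the generic locus, is the technical crux; the rest is the routine verification that each exhibited derivation is indeed locally nilpotent and respects the defining relation $uv=f$.
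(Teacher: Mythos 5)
The paper does not actually prove this statement: it is imported verbatim from \cite[Theorem 0.2(3)]{AKZ2012}, so your attempt must be judged against the argument given there. Your second family of derivations is precisely the correct and essential construction from that proof: the lifts $\widetilde\delta$ with $\widetilde\delta|_{\KK[X]}=u\delta$, $\widetilde\delta(u)=0$, $\widetilde\delta(v)=\delta(f)$, and their mirrors $\widehat\delta$. But your first family, the ``vertical'' derivations $\partial_1,\partial_2$, does not exist, and no ``adjustment'' can create it. Indeed, if $\partial$ is a derivation of $\KK[W]$, $W=\Susp(X,f)$, with $\partial(u)=0$ and $\partial|_{\KK[X]}=0$, then $u\,\partial(v)=\partial(uv-f)=0$; since $W$ is irreducible (\cite[Lemma~3.1]{AKZ2012}) its coordinate ring is a domain and $u\not\equiv 0$ on $W$, so $\partial(v)=0$, i.e. $\partial=0$. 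Your parenthetical justification is also false: the fiber $\{uv=c\}\cong\KK^\times$, $c\ne 0$, is \emph{not} ``homogeneous under the two $\GG_\mathrm{a}$'s'' --- a locally nilpotent derivation annihilates every unit, so $\KK[t,t^{-1}]$ has no nonzero locally nilpotent derivation and $\KK^\times$ admits no nontrivial $\GG_\mathrm{a}$-action at all. Hence there is no fiberwise transitivity, and nothing in your list of derivations ever produces the ``hyperbola direction''.

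This breaks your pointwise spanning argument on a larger locus than the one you flag as delicate. At a smooth point $w=(u_0,v_0,x_0)$ with $u_0v_0=f(x_0)\ne 0$ but $df_{x_0}=0$ (a critical point of $f$ over a nonzero value; e.g. the point $(u_0,u_0^{-1},0)$ on the smooth surface $uv=y^2+1$), every tangent vector coming from a lift equals $(0,\delta(f)(x_0),u_0v_\delta)=(0,0,u_0v_\delta)$ or $(\delta(f)(x_0),0,v_0v_\delta)=(0,0,v_0v_\delta)$, where $v_\delta\in T_{x_0}X$ is the tangent to the $\delta$-orbit; so all of them lie in the hyperplane $\{(0,0,\xi)\}\subsetneq T_wW$ and miss the direction $(u_0,-v_0,0)$. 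Spanning by your derivations is therefore simply false at such points (flexibility there is witnessed only by conjugates of these $\GG_\mathrm{a}$-subgroups). The repair --- and this is how the proof in \cite{AKZ2012} actually runs --- is to prove instead that $\SAut(W)$ acts transitively on $W^{\mathrm{reg}}$ and then invoke the equivalence of \cite[Theorem~0.1]{AFKKZ12013}: with $u$ frozen at $u_0\ne 0$, composed flows of the $\widetilde\delta$ move the $X$-coordinate through all of $X^{\mathrm{reg}}$ (transitivity of $\SAut(X)$, by flexibility and positive dimension) while $v=f(x)/u_0$ tags along; alternating between freezing $u$ and freezing $v$ rescales $u$ by arbitrary ratios $f(x')/f(x'')$, and these exhaust $\KK^\times$ because $f(X^{\mathrm{reg}})$ is a dense constructible, hence cofinite, subset of $\KK$; finally, a smooth point with $u=v=0$ escapes into this open part along the flow of some $\widetilde\delta$ with $\delta(f)(x_0)\ne 0$, which exists because smoothness of $W$ there forces the scheme $\{f=0\}$ to be smooth at $x_0$ (Theorem~\ref{sm_susp_theor}), so $df_{x_0}\ne 0$ while orbit tangents span $T_{x_0}X$. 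Your outline gestures at this last step, but as written --- resting on nonexistent vertical actions and on pointwise spanning --- the argument does not close.
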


So we come to the following source of homogeneous varieties. 

\begin{proposition} \label{homog_prop}
Any smooth suspension over a flexible irreducible affine variety is a homogeneous irreducible affine variety. 
\end{proposition}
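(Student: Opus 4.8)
The plan is to obtain the conclusion by chaining the two transitivity results recorded above with the smoothness hypothesis, so that the proof is essentially a synthesis rather than a fresh computation. Write $S = \Susp(X, f)$, where $X$ is the given flexible irreducible affine variety and $f \in \KK[X] \setminus \KK$. Since $X$ carries a nonconstant regular function, it has positive dimension, which is precisely the hypothesis needed to apply Theorem~\ref{AKZ2012_023theor}. The strategy is fourfold: (i) verify that $S$ is irreducible and affine; (ii) invoke Theorem~\ref{AKZ2012_023theor} to conclude that $S$ is flexible; (iii) apply the equivalence of conditions (a) and (b) in \cite[Theorem 0.1]{AFKKZ12013} to $S$; and (iv) use the smoothness of $S$ to upgrade transitivity on the smooth locus to transitivity on all of $S$.

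For step (i), $S$ is the zero set of $uv - f(y)$ in $\AA^2 \times X$, hence a closed affine subvariety. As $X$ is irreducible, the ring $\KK[\AA^2 \times X] = \KK[u,v] \otimes_\KK \KK[X]$ is a domain; viewing $uv - f$ as a polynomial of degree one in $u$ over $\KK[X][v]$ and using that $f$ is a nonconstant element of $\KK[X]$ involving neither $u$ nor $v$, a direct check shows that $uv - f$ is irreducible. Consequently $S$ is an irreducible affine variety, and Theorem~\ref{AKZ2012_023theor} applies verbatim to yield that $S$ is flexible in step (ii).

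For steps (iii)--(iv): since $S$ is irreducible, affine, and flexible, the equivalence of \cite[Theorem 0.1]{AFKKZ12013} shows that $\SAut(S)$ acts transitively on the smooth locus $S^{\mathrm{reg}}$. Here the smoothness hypothesis enters decisively, since $S$ smooth means $S^{\mathrm{reg}} = S$, and hence $\SAut(S)$ acts transitively on the whole of $S$. Because $\SAut(S)$ is by definition a subgroup of $\Aut(S)$, the full automorphism group $\Aut(S)$ acts transitively on $S$ as well, which is exactly the assertion that $S$ is a homogeneous variety.

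I do not expect a genuine obstacle in this argument, as all the substantial work is carried by the two cited theorems. The only points demanding care are administrative: confirming that $S$ is irreducible and affine (so that both cited results are applicable) and recording that the base has positive dimension (as required by Theorem~\ref{AKZ2012_023theor}). The crucial conceptual observation is merely that smoothness identifies $S^{\mathrm{reg}}$ with $S$, which is precisely what converts the flexibility-driven transitivity of $\SAut(S)$ on smooth points into homogeneity of the variety $S$.
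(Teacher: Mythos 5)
Your proof follows the same route as the paper's in steps (ii)--(iv): flexibility of the suspension via Theorem~\ref{AKZ2012_023theor}, transitivity of $\SAut(S)$ on $S^{\mathrm{reg}}$ via \cite[Theorem~0.1]{AFKKZ12013}, and the identification $S^{\mathrm{reg}}=S$ from smoothness; your remark that a nonconstant $f\in\KK[X]$ forces $\dim X>0$ is a worthwhile detail the paper leaves implicit. However, step (i) contains a genuine gap. You deduce that $S$ is irreducible from the fact that $uv-f$ is an irreducible \emph{element} of $\KK[X][u,v]$. That inference is valid over a UFD, where irreducible elements generate prime ideals, but the coordinate ring of a flexible affine variety need not be a UFD, and over a general domain the zero set of an irreducible element can be reducible. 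Concretely, take $X=\{xy=zw\}\subseteq\AA^4$; this is $\Susp(\AA^2,zw)$, hence flexible by Theorem~\ref{AKZ2012_023theor}, so it is a legitimate base for the proposition. In $\KK[X]$ the element $x$ is irreducible (by degree considerations in the graded domain $\KK[X]$), yet its zero set is $\{x=z=0\}\cup\{x=w=0\}$, a union of two planes. So ``the defining equation is an irreducible element'' does not imply ``the hypersurface is irreducible'' in the generality you need, and irreducibility of $S$ is required both for the conclusion of the proposition and in order to apply \cite[Theorem~0.1]{AFKKZ12013} at all.

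The gap is local and fixable in either of two ways. The paper simply cites \cite[Lemma~3.1]{AKZ2012}. Alternatively, prove that the ideal $(uv-f)$ is prime, not merely that the element is irreducible: first, $u$ is a non-zerodivisor modulo $(uv-f)$, since $ug=(uv-f)h$ gives, upon setting $u=0$, that $f\cdot h|_{u=0}=0$ in the domain $\KK[X][v]$, whence $u \mid h$ and $g\in(uv-f)$; therefore $A=\KK[X][u,v]/(uv-f)$ embeds into its localization $A_u$, and $A_u\cong\KK[X][u,u^{-1}]$ via $v\mapsto fu^{-1}$, which is a domain. Hence $A$ is a domain and $S$ is irreducible. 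With this repair your argument is complete and coincides with the paper's proof.
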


\begin{proof}
Let $X = \Susp(Y, f)$ be a smooth suspension over a flexible irreducible affine variety~$Y$. Since $Y$ is irreducible, the variety $X$ is irreducible as well according to~\cite[Lemma~3.1]{AKZ2012}. By Theorem~\ref{AKZ2012_023theor} and \cite[Theorem 0.1]{AFKKZ12013}, the subgroup $\SAut(X) \subseteq \Aut(X)$ acts on~$\Xreg$ transitively. Thus, the variety~$X$ is homogeneous since $X = \Xreg$. 
\end{proof}

To apply Proposition~\ref{homog_prop}, we have to investigate whether a suspension is smooth. First let us fix some definitions and notations. 

\medskip

Suppose $X \subseteq \AA^s$ is an affine variety and $I(X) \subseteq \KK[\AA^s] = \KK[T_1, \ldots, T_s]$ is the ideal of all polynomials that are zero on~$X$. Let $I(X) = (F_1, \ldots, F_t)$. Consider the Jacobian matrix
\[J_X(x) = \begin{pmatrix}\frac{\pa F_1}{\pa T_1}(x) & \ldots & \frac{\pa F_1}{\pa T_s}(x) \\ \ldots & \ldots & \ldots \\ \frac{\pa F_t}{\pa T_1}(x) & \ldots & \frac{\pa F_t}{\pa T_s}(x)\end{pmatrix}, \; x \in X.\]
It is known that the tangent space $T_xX$ to the variety~$X$ at a point~$x$ can be identified with the kernel of the linear map defined by the matrix~$J_X(x)$, see e.g.~\cite[Remark 5.2]{Kr1984}. In particular, 
\begin{equation}
\label{dim_Jac_eq}
\dim T_xX = s - \rk J_X(x). 
\end{equation}

It can be generalized to schemes, see~\cite[Section~13.1.7]{Va2023}. Namely, let $\Xf = \Spec \KK[T_1, \ldots, T_s] / (F_1, \ldots, F_t)$ be an affine $\KK$-scheme. In the reduced sense, $\Xf$ can be thought as a subvariety of~$\AA^s$ defined by equations $F_1 = \ldots = F_t = 0$. Then the tangent space $T_x\Xf$ to the scheme~$\Xf$ at a closed point~$x$ is the kernel of the linear map defined by the matrix~$J_\Xf(x)$ and equation~\eqref{dim_Jac_eq} holds as well. 

A variety (a scheme) is smooth at a point if the dimension of the variety (the scheme) coincides with the dimension of the tangent space at this point, and singular otherwise. 

\begin{lemma}
\label{susp_ideal_lem}
Let $Y \subseteq \AA^d$ be an affine variety and $f \in \KK[Y]$ be a nonconstant regular function on~$Y$. Consider the suspension $X = \Susp(Y, f) \subseteq \AA^{d+2}$. Let $I(Y) = (f_1, \ldots, f_m)$, $f_i \in \KK[\AA^d]$. Then for any polynomial $\widehat f \in \KK[\AA^d]$ which restricts to the function~$f$ on~$Y$ we have
\[I(X) = (uv - \widehat f, f_1, \ldots, f_m) \subseteq \KK[\AA^{d+2}].\]
\end{lemma}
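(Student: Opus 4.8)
The plan is to prove the two inclusions separately, with $J := (uv - \widehat f, f_1, \ldots, f_m) \subseteq I(X)$ being the easy one. Since $\widehat f$ restricts to $f$ on $Y$, every generator of $J$ vanishes on $X$: each $f_i$ vanishes already on $Y$, and at a point $(u_0, v_0, y) \in X$ one has $u_0 v_0 = f(y) = \widehat f(y)$. The same computation identifies the zero set $V(J) \subseteq \AA^{d+2}$ with $X$ set-theoretically, since a point lies in $V(J)$ exactly when its $Y$-coordinate lies in $V(f_1, \ldots, f_m) = Y$ and its $u,v$-coordinates satisfy $u_0 v_0 = \widehat f = f$. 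By Hilbert's Nullstellensatz, $I(X) = I(V(J)) = \sqrt J$, so the whole statement reduces to showing that $J$ is radical, i.e. that
\[
A := \KK[\AA^{d+2}]/J \,\cong\, \KK[Y][u,v]/(uv - f)
\]
is reduced; the displayed isomorphism uses $\KK[\AA^d]/(f_1, \ldots, f_m) = \KK[\AA^d]/I(Y) = \KK[Y]$ together with $\widehat f \mapsto f$. This reducedness is the real content, and the only genuine obstacle.

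To analyze $A$, I would first record that it is \emph{free} over $R := \KK[Y]$. Giving $\KK[u,v]$ the $\ZZ$-grading $\deg u = 1$, $\deg v = -1$ makes $uv - f$ homogeneous of degree $0$, and the relation $uv = f$ rewrites each monomial $u^a v^b$ in normal form $f^{\min(a,b)} u^{a-b}$ or $f^{\min(a,b)} v^{b-a}$. A short bookkeeping argument with this grading shows $R[u,v] = (uv - f) \oplus F$, where $F$ is the free $R$-module on $\{u^n\}_{n \ge 0} \cup \{v^n\}_{n \ge 1}$; hence $A \cong F$ is $R$-free on this basis. I would then reduce to the case that $R$ is a domain: as $R = \KK[Y]$ is reduced, the intersection of its minimal primes $\mathfrak{p}_1, \ldots, \mathfrak{p}_r$ is zero, giving $R \hookrightarrow \prod_i R/\mathfrak{p}_i$. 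Since $A$ and each $A_i := (R/\mathfrak{p}_i)[u,v]/(uv - f)$ are free over their base rings on the same basis, reading off coordinates shows $A \hookrightarrow \prod_i A_i$, so it suffices to prove every $A_i$ is reduced.

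Finally, with $R$ a domain I would split into two cases. If $f = 0$ in $R$, then $A = R[u,v]/(uv)$ and $(uv) = (u) \cap (v)$ is an intersection of the two primes $(u)$ and $(v)$, the quotients $R[v]$ and $R[u]$ being domains, so $A$ is reduced. If $f \neq 0$, I claim $A$ is itself a domain. Using the induced grading $A = \bigoplus_n A_n$ with $A_0 = R$, $A_n = R u^n$, $A_{-n} = R v^n$ for $n > 0$, take nonzero $a, b \in A$ and let $a_N$, $b_M$ be their nonzero homogeneous components of top degree. Multiplication respects the grading, so the degree $(N+M)$ component of $ab$ equals $a_N b_M$, which equals $r s\, f^{k}$ times a basis monomial for some $k \ge 0$ and some $r, s \in R \setminus \{0\}$; since $R$ is a domain and $f \neq 0$, this is nonzero, whence $ab \neq 0$. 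Thus every $A_i$ is reduced, so $A$ is reduced, $J$ is radical, and $I(X) = \sqrt J = J$. The delicate point throughout is exactly the reducedness of $A$, and within it the grading computation certifying that $A$ is a domain when $f \neq 0$.
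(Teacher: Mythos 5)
Your proof is correct, but it follows a genuinely different route from the paper's. The paper also reduces everything to showing that $(uv-\widehat f, f_1,\ldots,f_m)$ is radical, but it then argues concretely: divide $g$ (with $g^k$ in the ideal) by $uv-\widehat f$ in lex order to get a remainder $r$ with no terms divisible by $uv$, evaluate at each closed point $y\in Y$ to land in $\KK[u,v]$, use that $uv-c$ is square-free in $\KK[u,v]$ for every constant $c$ to force $r(y)=0$, and conclude $r\in I(Y)$. Your argument replaces this pointwise evaluation by structural commutative algebra: reducedness of $\KK[Y][u,v]/(uv-f)$, proved via freeness of this ring as a $\KK[Y]$-module on the pure powers of $u$ and $v$, reduction to the minimal primes of $\KK[Y]$, and a graded top-component argument showing $R[u,v]/(uv-f)$ is a domain when $R$ is a domain and $f\neq 0$ (with the degenerate case $f=0$ handled by $(uv)=(u)\cap(v)$). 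Note that your freeness decomposition $R[u,v]=(uv-f)\oplus F$ is the intrinsic counterpart of the paper's division-with-remainder step, so the two proofs share their linear-algebra core but diverge in how they conclude. What your version buys: the radicality statement is established over an arbitrary reduced coefficient ring, with no appeal to closed points or to $\KK$ being algebraically closed (the Nullstellensatz enters only in the translation $I(X)=\sqrt{J}$, which the paper uses implicitly as well), and the domain case recovers as a byproduct the irreducibility of $\Susp(Y,f)$ for irreducible $Y$, which the paper instead cites from~\cite[Lemma~3.1]{AKZ2012}. What the paper's version buys is brevity: the square-freeness of $uv-c$ in two variables does all the work at once, with no case distinction on whether $f$ vanishes on a component of $Y$, a case you rightly had to treat separately.
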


\begin{proof}
By definition, $X$ is the zero set of the ideal $I = (uv - \widehat f, f_1, \ldots, f_m)$ in the polynomial algebra $\KK[\AA^{d+2}] = \KK[u, v, \yu]$, where $\yu = (y_1, \ldots, y_d)$. We have to check that $I$ is radical. 

Let $g^k \in I$ for some $g \in \KK[u, v, \yu]$. 
Consider the lexicographic order on $\KK[u, v, \yu]$ with $u \succ v \succ y_1 \succ \ldots \succ y_d$. The division of the polynomial $g$ by the polynomial $uv - \widehat f$ with respect to this order gives a decomposition 
\begin{equation} \label{Groebdiv_eq1}
g = (uv - \widehat f) h + r,
\end{equation}
where $h, r \in \KK[u, v, \yu]$ and $r$ is a polynomial that has no terms divisible by the leading term $uv$ of the polynomial $uv - \widehat f$. 

Substitute in $\yu$ coordinates of a point $y \in Y$ in equation~\eqref{Groebdiv_eq1}. We obtain 
\begin{equation} \label{Groebdiv_eq2}
g(y) = (uv - \widehat f(y)) h(y) + r(y),
\end{equation}
where $g(y), h(y), r(y) \in \KK[u,v]$, $\widehat f(y) \in \KK$. Still the remainder $r(y)$ has no terms divisible by $uv$. Since $g^k \in I = (uv - \widehat f, f_1, \ldots, f_m) \subseteq \KK[u,v,\yu]$ and $f_1(y) = \ldots = f_m(y) = 0$, it follows that 
\begin{equation} \label{Groebdiv_eq3}
g(y)^k \in (uv - \widehat f(y)) \subseteq \KK[u,v].
\end{equation}
The polynomial $uv - \widehat f(y) \in \KK[u,v]$ is square-free, so from equation~\eqref{Groebdiv_eq3} it follows that $g(y) \in (uv - \widehat f(y))$ as well. Then according to~\eqref{Groebdiv_eq2} we also have $r(y) \in (uv - \widehat f(y))$. Since $r(y)$ has no terms divisible by $uv$, it follows that $r(y) = 0$. Since $y \in Y$ is an arbitrary point of~$Y$, we obtain $r \in I(Y) = (f_1, \ldots, f_m)$. Then by equation~\eqref{Groebdiv_eq1} we have $g \in I$. Thus, the ideal $I$ is radical. 
\end{proof}

\begin{theorem}
\label{sm_susp_theor}
Let $Y$ be an affine variety and $f \in \KK[Y]$ be a nonconstant regular function on~$Y$. Then the suspension $X = \Susp(Y, f)$ is singular at a point $x=(u,v,y)\in X$ if and only if at least one of the following conditions holds: 
\begin{enumerate}
  \item the variety $Y$ is singular at the point $y \in Y$;
  \item $u = v = 0$ and the scheme $\Zf = \Spec \KK[Y]/(f)$ is singular at the point $y \in \Zf$.
\end{enumerate}
\end{theorem}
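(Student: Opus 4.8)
The plan is to compute the Jacobian matrix of $X$ at $x$ from the explicit generators of $I(X)$ furnished by Lemma~\ref{susp_ideal_lem} and to read off smoothness through the rank formula~\eqref{dim_Jac_eq}. Fix a polynomial $\widehat f \in \KK[\AA^d]$ restricting to $f$, so that $I(X) = (uv - \widehat f, f_1, \ldots, f_m)$ where $I(Y) = (f_1, \ldots, f_m)$. Differentiating with respect to the coordinates $u, v, y_1, \ldots, y_d$ of $\AA^{d+2}$, the generator $uv - \widehat f$ contributes the row $(v, u, -\pa_{\yu}\widehat f)$, while each $f_i$ depends only on $\yu$ and contributes a row $(0, 0, \pa_{\yu} f_i)$. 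Hence $J_X(x)$ has the block form
\[
J_X(x) = \begin{pmatrix} v & u & -\pa_{\yu}\widehat f(y) \\ 0 & 0 & J_Y(y) \end{pmatrix},
\]
where $J_Y(y)$ is the $m \times d$ Jacobian of $Y$ at $y$ and $\pa_{\yu}$ denotes the gradient in the variables $y_1, \ldots, y_d$.

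Next I would determine $\rk J_X(x)$ in the two regimes distinguished by condition~(b). If $(u,v) \ne (0,0)$, then the top row has a nonzero entry among its first two columns, in which every remaining row vanishes; it is therefore independent of the rows of $J_Y(y)$, giving $\rk J_X(x) = \rk J_Y(y) + 1$. If $u = v = 0$, all rows are supported on the last $d$ columns, and $J_X(x)$ collapses to the row $-\pa_{\yu}\widehat f(y)$ placed above $J_Y(y)$. Since $\Zf = \Spec \KK[Y]/(f)$ is presented as $\Spec \KK[\AA^d]/(f_1, \ldots, f_m, \widehat f)$, this is precisely $J_\Zf(y)$ up to sign and row order, so $\rk J_X(x) = \rk J_\Zf(y)$.

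It then remains to convert ranks into tangent space dimensions and compare them with the actual dimensions. With $n = \dim Y$, the projection $X \to Y$ has every fiber of dimension~$1$, so $\dim_x X = n+1$; and since $f$ is nonconstant, hence a nonzerodivisor on the irreducible $Y$, the scheme $\Zf$ is a hypersurface in $Y$ with $\dim_y \Zf = n-1$ for every $y \in \Zf$ by Krull's principal ideal theorem. Applying~\eqref{dim_Jac_eq}: when $(u,v) \ne (0,0)$ one gets $\dim T_x X = 1 + \dim T_y Y \ge 1 + n = \dim_x X$, with equality exactly when $Y$ is smooth at $y$; when $u = v = 0$ one gets $\dim T_x X = 2 + \dim T_y \Zf \ge 2 + (n-1) = \dim_x X$, with equality exactly when $\Zf$ is smooth at $y$. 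In each regime $X$ is smooth at $x$ if and only if the relevant tangent dimension is minimal, which is the failure of condition~(a) in the first regime and of condition~(b) in the second.

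The step that reconciles this per-regime analysis with the stated disjunction ``(a) or (b)'' is the observation that when $u = v = 0$ condition~(a) is already implied by~(b): if $Y$ is singular at $y$ then $\rk J_Y(y) < d - n$, whence $\rk J_\Zf(y) \le \rk J_Y(y) + 1 \le d - n < d - (n-1)$ and $\Zf$ is automatically singular at $y$. Thus ``(a) or (b)'' matches exactly the smoothness criterion obtained in each case, and the equivalence follows. I expect the genuine difficulty to lie not in the rank computation but in the bookkeeping of the local dimensions of $X$ and $\Zf$ — in particular in justifying $\dim_x X = n+1$ and $\dim_y \Zf = n-1$ cleanly, which is where irreducibility of $Y$ (and Krull's theorem) enters and where the reducible case would require extra care.
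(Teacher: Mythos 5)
Your proof is correct and is essentially the paper's own argument: both start from Lemma~\ref{susp_ideal_lem}, compute the same block Jacobian matrix, and convert ranks into tangent-space dimensions via~\eqref{dim_Jac_eq}. Your case split on $(u,v)\ne(0,0)$ versus $u=v=0$, combined with the observation that condition (a) forces condition (b) when $u=v=0$, is just a dual organization of the paper's split according to which of its two inequalities is strict; it even makes the ``conditions imply singular'' direction slightly more explicit, while sharing the paper's implicit reliance on irreducibility of~$Y$ for the dimension counts $\dim_x X=\dim Y+1$ and $\dim_y \Zf=\dim Y-1$.
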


\begin{remark}
In the reduced sense $\Zf$ can be thought as the subvariety $\{f = 0\} \subseteq Y$. 
\end{remark}

\begin{remark}
In~\cite[Lemma 3.2]{AKZ2012}, it is proved that if $\pi\colon X \to Y$ is the restriction of the projection $\AA^2 \times Y \to Y$ then $\pi(\Xreg)= Y^{\mathrm{reg}}$. 
\end{remark}

\begin{proof}
Let $Y \subseteq \AA^d$ and $\KK[\AA^d] = \KK[\yu]$, where $\yu = (y_1, \ldots, y_d)$. Let $I(Y) = (f_1,\ldots, f_m)$, $f_i \in \KK[\yu]$, and a polynomial $\widehat f \in \KK[\yu]$ restricts to the function~$f$ on~$Y$. By Lemma~\ref{susp_ideal_lem}, 
\[I(X) = (uv-\widehat f(\yu), f_1(\yu), \ldots, f_m(\yu)) \subseteq \KK[u,v,\yu].\]
Let us calculate the Jacobian matrix at a point $x = (u,v,y) \in X$: 
\[
J_X(x) = \begin{pmatrix}
v & u & -\frac{\pa \widehat f}{\pa y_1}(y) & \ldots & -\frac{\pa \widehat f}{\pa y_d}(y)\\
0 & 0 & \frac{\pa f_1}{\pa y_1}(y) & \ldots & \frac{\pa f_1}{\pa y_d}(y)\\
\ldots & \ldots & \ldots & \ldots & \ldots \\
0 & 0 & \frac{\pa f_m}{\pa y_1}(y) & \ldots & \frac{\pa f_m}{\pa y_d}(y)
\end{pmatrix}
\]
Notice that the deleting of the first two columns and the first row gives the Jacobian matrix of~$Y$ at~$y$. Then
$\rk J_X(x) \le \rk J_Y(y) + 1$. For the dimensions of tangent spaces it means 
$d + 2 - \dim T_xX \le d - \dim T_yY + 1$, i.e. 
\begin{equation}
\label{dimTXTY_eq}
\dim T_xX - 1 \ge \dim T_yY.
\end{equation}
We also know that
\begin{equation}
\label{dimTYY_eq}
\dim T_yY \ge \dim Y = \dim X - 1.
\end{equation}
It follows that $\dim T_xX = \dim X$ if and only if both inequalities in~\eqref{dimTXTY_eq} and~\eqref{dimTYY_eq} turn into equalities. So the condition that the variety~$X$ is singular at a point~$x$, i.e. $\dim T_xX \ne \dim X$, is equivalent to one of the following:
\begin{enumerate}
  \item the inequality in~\eqref{dimTYY_eq} is strict, i.e. $Y$ is singular at the point~$y$;
  \item $Y$ is smooth at $y$ but inequality~\eqref{dimTXTY_eq} is strict, i.e. $\rk J_X(x) = \rk J_Y(y)$. 
\end{enumerate}

Consider the second case. The first row of the Jacobian matrix $J_X(x)$ has to be a linear combination of other rows. It follows that $u = v = 0$. Since $uv = \widehat f(y)$ we obtain that $y \in \Zf$. Notice that the deleting of the first two columns of $J_X(x)$ gives the Jacobian matrix of $\Zf$ in $y$. Since $\rk J_X(x)$ and $\rk J_Y(y)$ coincide, they also equal $\rk J_\Zf(y)$. Then $\dim T_y \Zf = \dim T_y Y = \dim Y$, but $\dim \Zf = \dim Y - 1$. Thus, $\Zf$ is singular at the point $y \in \Zf$ in the second case. 
\end{proof}

This result motivates the following definition. 
\begin{definition}
Let $Y$ be an affine variety and $f \in \KK[Y]$ be a nonconstant regular function on~$Y$. The suspension $\Susp(Y, f)$ is called \emph{regular} if the variety $Y$ and the scheme ${\Zf = \Spec \KK[Y]/(f)}$ are smooth. 
\end{definition}

We obtain several corollaries of Theorem~\ref{sm_susp_theor}. 

\begin{corollary}
\label{reg_susp_cor}
Let $Y$ be an affine variety and $f \in \KK[Y]$ be a nonconstant regular function on~$Y$. The suspension $\Susp(Y, f)$ is smooth if and only if it is regular. 
\end{corollary}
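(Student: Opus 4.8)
The plan is to deduce this directly from Theorem~\ref{sm_susp_theor} by translating its pointwise singularity criterion into a global smoothness statement. By definition $X = \Susp(Y,f)$ is smooth precisely when it is smooth at every point $x \in X$, while ``regular'' means that $Y$ and $\Zf$ are both smooth. So I would show that the two conditions (a) and (b) of the theorem fail at every point of $X$ if and only if both $Y$ and $\Zf$ are smooth.

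First I would establish two elementary coverage facts. The projection $\pi \colon X \to Y$, $(u,v,y) \mapsto y$, is surjective: for any $y \in Y$ the equation $uv = f(y)$ is solvable in $(u,v)$, e.g. by taking $u = 1$, $v = f(y)$. Hence every $y \in Y$ occurs as the $Y$-coordinate of some $x \in X$, so condition~(a) --- ``$Y$ is singular at $y$'' --- fails for all $x \in X$ exactly when $Y$ is smooth at every point, i.e.\ $Y$ is smooth. Similarly, a point of the form $(0,0,y)$ lies in $X$ if and only if $f(y) = 0$, that is, if and only if $y$ belongs to $\Zf$ in the reduced sense $\{f = 0\}$; and conversely every point of $\Zf$ arises in this way. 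Therefore condition~(b) fails for all $x \in X$ exactly when $\Zf$ is smooth at every point, i.e.\ $\Zf$ is smooth.

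Combining these observations, Theorem~\ref{sm_susp_theor} tells us that $X$ is smooth at $x = (u,v,y)$ if and only if neither (a) nor (b) holds, so $X$ is smooth at every point if and only if (a) fails everywhere and (b) fails everywhere, which by the two coverage facts is equivalent to $Y$ and $\Zf$ both being smooth --- that is, to the suspension being regular. The argument is thus a short logical manipulation of the theorem, and there is essentially no hard step; the only point requiring a moment's care is verifying that $\pi$ covers all of $Y$ and that the slice $\{u = v = 0\} \cap X$ covers all of $\Zf$, so that the pointwise conditions of the theorem indeed propagate to the global smoothness of $Y$ and of $\Zf$.
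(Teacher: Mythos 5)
Your proposal is correct and follows the same route the paper intends: the paper states Corollary~\ref{reg_susp_cor} as an immediate consequence of Theorem~\ref{sm_susp_theor}, and your argument is exactly that deduction, with the two routine coverage facts (surjectivity of the projection onto $Y$, and the identification of the slice $\{u=v=0\}\cap X$ with the closed points of $\Zf$) spelled out explicitly. No gaps; the extra care about coverage is precisely what makes the ``immediate'' deduction rigorous.
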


Now we can consider an \emph{iterated suspension} over an affine variety~$Y$, i.e. the variety $Y_k$ obtained by a series of suspensions
\begin{equation}\begin{aligned} \label{iter_susp_eq}
Y_1 &=\, Y,\\
Y_2 &=\, \Susp(Y_1, f_1),\\
&\ldots\\
Y_k &=\, \Susp(Y_{k-1}, f_{k-1}),
\end{aligned}\end{equation}
where $f_i \in \KK[Y_i] \setminus \KK$, $1 \le i \le k-1$. 

\begin{corollary}
\label{iter_susp_smooth_cor}
Let $Y_k$ be an iterated suspension over an affine variety $Y$. Then $Y_k$ is smooth if and only if the variety $Y$ and the schemes $\Zf_i = \Spec \KK[Y_i]/(f_i)$, $1 \le i \le k-1$, are smooth. 
\end{corollary}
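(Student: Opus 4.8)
The plan is to prove the statement by induction on the length $k$ of the chain of suspensions in~\eqref{iter_susp_eq}, taking Corollary~\ref{reg_susp_cor} as the sole engine of the argument. The base case $k=1$ is vacuous: there are no auxiliary schemes $\Zf_i$ to consider, and the assertion reduces to the tautology that $Y_1 = Y$ is smooth if and only if $Y$ is smooth.

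For the inductive step I would peel off the outermost suspension. Since $Y_k = \Susp(Y_{k-1}, f_{k-1})$ with $f_{k-1} \in \KK[Y_{k-1}] \setminus \KK$, Corollary~\ref{reg_susp_cor} applies directly and shows that $Y_k$ is smooth if and only if both $Y_{k-1}$ and the scheme $\Zf_{k-1} = \Spec \KK[Y_{k-1}]/(f_{k-1})$ are smooth. Now $Y_{k-1}$ is itself an iterated suspension over $Y$ of length $k-1$, built from the data $f_1, \ldots, f_{k-2}$, so the induction hypothesis applies to it: the variety $Y_{k-1}$ is smooth if and only if $Y$ and the schemes $\Zf_1, \ldots, \Zf_{k-2}$ are all smooth.

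Substituting the second equivalence into the first yields that $Y_k$ is smooth if and only if $Y, \Zf_1, \ldots, \Zf_{k-1}$ are simultaneously smooth, which is precisely the claim. I expect no serious obstacle, since all of the geometric content has already been extracted in Theorem~\ref{sm_susp_theor} and packaged into Corollary~\ref{reg_susp_cor}; what remains is essentially bookkeeping. The two points that do deserve a line of care are the following. First, each intermediate $Y_i$ must be a genuine (reduced) affine variety, so that Corollary~\ref{reg_susp_cor} is legitimately applicable at every stage; this is guaranteed by Lemma~\ref{susp_ideal_lem}, which identifies $I(\Susp(W,f))$ as a radical ideal and hence exhibits $\Susp(W,f)$ as an affine variety whenever $W$ is one and $f \in \KK[W]\setminus\KK$. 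Second, one should check that the indices of the schemes $\Zf_i$ match across the inductive step, so that peeling off the top suspension contributes exactly the missing condition on $\Zf_{k-1}$, with nothing double counted or omitted.
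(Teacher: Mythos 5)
Your proof is correct and is essentially the paper's own argument: the paper's one-line proof (``$Y_k$ is smooth if and only if every suspension in the chain is regular, by Corollary~\ref{reg_susp_cor}'') is precisely your induction, compressed. Your explicit attention to the reducedness of each intermediate $Y_i$ via Lemma~\ref{susp_ideal_lem} is a sound but minor elaboration of the same route.
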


\begin{proof}
By Corollary~\ref{reg_susp_cor}, $Y_k$ is smooth if and only if any suspension in~\eqref{iter_susp_eq} is regular. 
\end{proof}

The following corollary gives a wide class of affine homogeneous varieties. 

\begin{corollary}
\label{iter_susp_homog_cor}
Let $Y$ be a smooth flexible irreducible affine variety of positive dimension and~$Y_k$ be an iterated suspension over $Y$. Then $Y_k$ is a homogeneous variety if and only if the schemes $\Zf_i = \Spec \KK[Y_i]/(f_i)$, $1 \le i \le k-1$, are smooth. 
\end{corollary}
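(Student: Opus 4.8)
The plan is to combine three ingredients already established: Corollary~\ref{iter_susp_smooth_cor}, which characterizes smoothness of the iterated suspension in terms of the auxiliary schemes $\Zf_i$; Proposition~\ref{homog_prop}, which turns a smooth suspension over a flexible irreducible affine base into a homogeneous variety; and the general fact recalled in the introduction that every homogeneous variety is smooth. Since $Y$ is assumed smooth, Corollary~\ref{iter_susp_smooth_cor} tells us that smoothness of $Y_k$ is equivalent to smoothness of all the $\Zf_i$, so the two implications of the corollary will reduce to relating homogeneity of $Y_k$ to its smoothness.

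For the forward implication, I would argue as follows. Suppose $Y_k$ is a homogeneous variety. Then $Y_k$ is smooth, because any homogeneous variety is smooth. As $Y$ is smooth by hypothesis, Corollary~\ref{iter_susp_smooth_cor} immediately forces every scheme $\Zf_i$, $1 \le i \le k-1$, to be smooth. This direction requires no further work.

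For the reverse implication, assume all the $\Zf_i$ are smooth. Since $Y$ is smooth as well, Corollary~\ref{iter_susp_smooth_cor} shows that $Y_k$ is smooth, and it remains to check that $Y_k$ is homogeneous. Here I would apply Proposition~\ref{homog_prop} to the last suspension $Y_k = \Susp(Y_{k-1}, f_{k-1})$. This needs the base $Y_{k-1}$ to be a flexible irreducible affine variety of positive dimension, which I would obtain by induction on~$i$: the base case $Y_1 = Y$ has all four properties by hypothesis, and if $Y_i$ has them, then $Y_{i+1} = \Susp(Y_i, f_i)$ is flexible by Theorem~\ref{AKZ2012_023theor}, irreducible by \cite[Lemma~3.1]{AKZ2012}, affine as a hypersurface in $\AA^2 \times Y_i$, and of positive dimension since $\dim Y_{i+1} = \dim Y_i + 1$. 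Proposition~\ref{homog_prop} then yields that $Y_k$ is homogeneous.

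The delicate point to keep straight is that the two relevant properties propagate up the tower by different mechanisms and under different hypotheses. Flexibility (together with irreducibility, affineness, and positive dimension) passes from $Y_i$ to $Y_{i+1}$ via Theorem~\ref{AKZ2012_023theor} \emph{regardless} of whether the intermediate suspensions are smooth, whereas smoothness of the whole tower is controlled separately by the schemes $\Zf_i$ through Corollary~\ref{iter_susp_smooth_cor}. Consequently one never needs $Y_2, \ldots, Y_{k-1}$ to be smooth in order to run the flexibility induction; the only smoothness input consumed by Proposition~\ref{homog_prop} is that of $Y_k$ itself, and this is supplied exactly by the assumption that every $\Zf_i$ is smooth.
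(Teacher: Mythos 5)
Your proof is correct and follows essentially the same route as the paper: necessity via the fact that homogeneous varieties are smooth combined with Corollary~\ref{iter_susp_smooth_cor}, and sufficiency via Proposition~\ref{homog_prop} applied to the final suspension $Y_k = \Susp(Y_{k-1}, f_{k-1})$. The only difference is that you make explicit the induction propagating flexibility, irreducibility, affineness and positive dimension up the tower (via Theorem~\ref{AKZ2012_023theor}), a step the paper leaves implicit in its one-line appeal to Proposition~\ref{homog_prop}; your observation that this propagation needs no smoothness of the intermediate varieties is accurate and worth stating.
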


\begin{proof}
According to Corollary~\ref{iter_susp_smooth_cor}, the variety $Y_k$ is smooth if and only if the schemes $\Zf_i$, ${1 \le i \le k-1}$, are smooth. Any homogeneous variety is smooth, so it is a necessary condition. Conversely, if $Y$ is smooth, then by Proposition~\ref{homog_prop} the variety $Y_k$ is homogeneous. 
\end{proof}

\begin{example}
Suspensions over $\SL_2(\KK)$ can be considered as iterated suspensions over~$\AA^2$. For example, the suspension $Y_3 = \Susp(Y_2, x^3z+yt^2)$ over the variety \[Y_2 = \SL_2(\KK) = \left\{\det\begin{pmatrix}x & z\\t & y\end{pmatrix} = 1\right\}\] is the iterated suspension over $Y_1 = \AA^2 = \Spec \KK[x,y]$ given by the system of equations
$$\left\{\begin{aligned}
zt &= xy - 1\\
uv & = x^3z+yt^2
\end{aligned}\right.$$
Let us apply Corollary~\ref{iter_susp_homog_cor} to prove that the variety~$Y_3$ is homogeneous. The hyperbola $\Zf_1 = \Spec \KK[x,y]/(xy-1)$ is smooth. 
Consider \[\Zf_2 = \Spec\KK[\SL_2(\KK)] / (x^3z+yt^2) = \Spec\KK[z,t,x,y] / (zt - xy + 1, \, x^3z+yt^2).\] 
If the Jacobian matrix of $\Zf_2$
\[J = \begin{pmatrix} t & z & -y & -x\\ x^3 & 2yt & 3x^2z & t^2\end{pmatrix}\] at a point of $\Zf_2$ has rank~$1$, then $2yt^2 - x^3z = 0$. Taking into account $x^3z+yt^2 = 0$ we obtain $x^3z = yt^2 = 0$, whence according to $zt - xy + 1 = 0$ we see that either $x = y = 0$, $z, t \ne 0$, or $z = t = 0$, $x, y \ne 0$. This contradicts the condition $t^3+x^4=0$, which follows from $\rk J = 1$. 
\end{example}

\pagebreak[2]

Let us consider the case $Y = \AA^n$. 

\begin{corollary}
\label{susp_An_cor}
Let $X = \Susp(\AA^n, f)$ and $f = p_1\ldots p_k \in \KK[y_1, \ldots, y_n]$, where $p_i$ are irreducible polynomials. Then $X$ is homogeneous if and only if $p_i \ne p_j$ for $i \ne j$ and the subvarieties $\{p_i = 0\} \subseteq \AA^n$ are smooth and do not intersect pairwise.
\end{corollary}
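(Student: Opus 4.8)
The plan is to reduce the homogeneity of $X = \Susp(\AA^n, f)$ to the smoothness of the scheme $\Zf = \Spec \KK[y_1, \ldots, y_n]/(f)$, and then to translate that smoothness into the three listed conditions by an explicit analysis of the gradient of~$f$.

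First I would invoke Corollary~\ref{iter_susp_homog_cor}, applied with $k = 2$, $Y_1 = \AA^n$ and $f_1 = f$: since $\AA^n$ is a smooth flexible irreducible affine variety of positive dimension, it tells us that $X = \Susp(\AA^n, f)$ is homogeneous if and only if $\Zf$ is smooth. Now $\Zf$ is the hypersurface scheme in $\AA^n$ cut out by the single equation $f = 0$, so its Jacobian matrix at a point~$y$ is the one row $\nabla f(y) = \left(\tfrac{\pa f}{\pa y_1}(y), \ldots, \tfrac{\pa f}{\pa y_n}(y)\right)$; as $\dim \Zf = n - 1$, smoothness at $y \in \Zf$ amounts to $\rk J_\Zf(y) = 1$, that is, to $\nabla f(y) \neq 0$. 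The statement thus reduces to: $\nabla f$ is nowhere zero on $\Zf$ if and only if the $p_i$ are pairwise distinct, each $\{p_i = 0\}$ is smooth, and the $\{p_i = 0\}$ are pairwise disjoint.

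The heart of the argument is the product rule $\nabla f = \sum_{i=1}^k \big(\prod_{j \neq i} p_j\big)\, \nabla p_i$ combined with a case analysis at a point $y$ with $f(y) = 0$, according to how many factors vanish there. If exactly one factor vanishes, say $p_{i_0}$, then every summand with $i \neq i_0$ carries the vanishing scalar $p_{i_0}(y)$ and drops out, leaving $\nabla f(y) = \big(\prod_{j \neq i_0} p_j(y)\big)\,\nabla p_{i_0}(y)$ with nonzero prefactor, so $\nabla f(y) \neq 0$ is equivalent to smoothness of $\{p_{i_0} = 0\}$ at~$y$. If two distinct factors vanish at $y$, then for every index $i$ the product $\prod_{j \neq i} p_j(y)$ still retains at least one vanishing factor, hence $\nabla f(y) = 0$ and $\Zf$ is singular at~$y$. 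Finally, a repeated factor $p_i = p_j$ gives $f = p_i^2 g$, whence $\nabla f$ vanishes identically along the nonempty hypersurface $\{p_i = 0\} \subseteq \Zf$.

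Assembling the cases yields both directions. For sufficiency, under the three conditions each $y \in \Zf$ lies on exactly one smooth component, so the one-factor computation gives $\nabla f(y) \neq 0$ and $\Zf$ is smooth. For necessity I argue by contraposition: a repeated factor, a pairwise intersection, or a singular point of some component each produces a point of $\Zf$ at which $\nabla f$ vanishes, contradicting smoothness---in the singular-component case one first uses the already established distinctness and disjointness to isolate that component. I expect the only delicate point to be the bookkeeping in this gradient analysis, in particular checking that the scalar prefactors all vanish at multiple-intersection and repeated-factor points, whereas the reduction to smoothness of $\Zf$ is immediate from the cited corollary.
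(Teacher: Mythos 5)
Your proposal is correct and follows essentially the same route as the paper: the same reduction via Corollary~\ref{iter_susp_homog_cor} to smoothness of the scheme $\Zf = \Spec\KK[\AA^n]/(f)$, followed by the same product-rule analysis of $\nabla f$ at points of $\Zf$. Your three-way case split (one vanishing factor, two distinct vanishing factors, repeated factor) is just a more explicit bookkeeping of the paper's one-line computation after the normalization $p_1(y)=0$, in which the repeated-factor case is absorbed into the condition that $y$ lies on a second component.
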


\begin{proof}
According to Corollary~\ref{iter_susp_homog_cor}, the suspension~$X$ is homogeneous if and only if the scheme $\Spec \KK[\AA^n]/(f)$ is smooth. 

A closed point~$y$ of $\Spec \KK[\AA^n]/(f)$ is singular if and only if $\frac{\pa f}{\pa y_1}(y) = \ldots = \frac{\pa f}{\pa y_n}(y) = 0$. We may assume, without loss of generality, that $p_1(y) = 0$. Since \[\frac{\pa f}{\pa y_i} = \frac{\pa p_1}{\pa y_i}p_2 \ldots p_k + \ldots + p_1\ldots p_{k-1}\frac{\pa p_k}{\pa y_i},\]
the condition of singularity is equivalent to~$\frac{\pa p_1}{\pa y_i}(y)p_2(y) \ldots p_k(y) = 0$ for any $1 \le i \le n$, i.e. the point $y$ belongs to some $\{p_j = 0\}$ for $j \ne i$ or $y$ is a singular point of $\{p_1 = 0\}$. 
\end{proof}

\begin{example}
The affine hypersurface
\[
\{x^2y + x + uv = 0\} \subseteq \AA^4
\]
is homogeneous according to Corollary~\ref{susp_An_cor}. It is the suspension $\Susp(\AA^2, f)$, where the polynomial $f(x,y) = -x(xy+1)$ has simple irreducible factors, and the subvariety ${\{f = 0\}} \subseteq \AA^2$ has smooth irreducible components $\{x=0\}$ and $\{xy=-1\}$ that do not intersect. 
\end{example}

\begin{remark}
The condition on irreducible components in Corollary~\ref{susp_An_cor} can be checked via computer algebra methods. Namely, for $p_1,p_2 \in \KK[y_1, \ldots, y_n]$ the hypersurfaces $\{p_1 = 0\}$ and $\{p_2 = 0\}$ do not intersect if and only if the Gr\"obner basis of the ideal $(p_1, p_2)$ contains a constant. Alternatively, this condition holds if and only if the resultant of~$p_1$ and $p_2$ is a nonzero constant, where $p_1, p_2$ are considered as polynomials in $y_1$ with coefficients in $\KK[y_2,\ldots, y_n]$. The component~$\{p = 0\}$ is smooth if and only if the Gr\"obner basis of the ideal $(p, \frac{\pa p}{\pa y_1}, \ldots, \frac{\pa p}{\pa y_n})$ contains a constant. 
\end{remark}

\section{The case of surfaces}
\label{dim2_sec}

In this section we provide a series of explicit examples of affine surfaces that are homogeneous varieties but not homogeneous spaces. 

Let $x, y, z$ be coordinates in $\AA^3$. A \emph{Danielewski surface} is a surface in $\AA^3$ given by equation $xz^n = f(y)$, where $n \in \NN$ and $f \in \KK[y]$. It is known that two Danielewski surfaces with parameters $n_1, n_2 \in \NN$ and polynomials $f_1(y), f_2(y)$ are isomorphic if and only if $n_1 = n_2$ and 
\begin{equation}
\label{Daniel_change_eq}
f_1(y) = af_2(by+c)
\end{equation}
for some $a,b \in \KK^\times$, $c \in \KK$, see \cite[Lemma 2.10]{Da2004}. 

\smallskip

Let us formulate the result. 

\begin{theorem}
\label{Dan_theor}
Let $X$ be an affine surface given in $\AA^3$ with coordinates $x, y, z$ by equation $xz^n = f(y)$, where $f$ is a nonconstant polynomial. Then 
\begin{enumerate}
  \item[(i)] $X$ is a homogeneous variety if and only if $\deg f = 1$ or $n=1$ and the polynomial~$f$ has no multiple roots;
  \item[(ii)] $X$ is a homogeneous space if and only if $\deg f = 1$ or $n=1$, the polynomial~$f$ has no multiple roots and $\deg f = 2$. 
\end{enumerate}
\end{theorem}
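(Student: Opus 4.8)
\textbf{Proof proposal for Theorem~\ref{Dan_theor}.}

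The plan is to split the analysis according to the two defining quantities $n$ and $\deg f$, and to handle the ``homogeneous variety'' and ``homogeneous space'' assertions by quite different tools. For part~(i), the key observation is that $X = \{xz^n = f(y)\}$ is a suspension over the affine line $\AA^1 = \Spec\KK[y]$ exactly when $n=1$, and in that case Corollary~\ref{iter_susp_homog_cor} (with $Y=\AA^1$, which is smooth, flexible and of positive dimension) reduces homogeneity to smoothness of the scheme $\Zf = \Spec\KK[y]/(f)$. This scheme is smooth precisely when $f$ has no multiple roots, giving the stated criterion for $n=1$. The remaining cases are $\deg f = 1$, where $X$ is smooth and in fact isomorphic to $\AA^2$ (or at least a homogeneous space, handled below), and the case $n \ge 2$ with $\deg f \ge 2$, where I would show $X$ is \emph{not} even smooth: computing the Jacobian of $xz^n - f(y)$, a singular point occurs wherever $z^n = 0$, $z^{n-1}x = 0$ (automatic when $n \ge 2$ and $z=0$) and $f'(y) = 0$; since $\deg f \ge 2$ forces $f'$ to have a root $y_0$, the point $(f(y_0)/0\ldots)$ — more precisely $x$ free with $z=0$ and $f(y_0)=0$, or handling $f(y_0)\neq 0$ separately — must be examined. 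The cleanest route is to note that for $n\ge 2$ every point with $z=0$ is singular unless $f$ is constant on the fibre, so $X$ is singular and hence not homogeneous. I would verify this directly from the partial derivatives rather than invoke the suspension machinery, since for $n \ge 2$ the variety is not a suspension.

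For part~(ii), the forward direction ``homogeneous space $\Rightarrow$ the stated conditions'' is where the main obstacle lies. Since any homogeneous space is in particular a homogeneous variety, part~(i) already forces either $\deg f = 1$, or $n=1$ with $f$ squarefree. It remains to rule out $n=1$, $f$ squarefree, $\deg f \ge 3$. The natural invariant to exploit is topology or the structure of $\Aut(X)$: an affine homogeneous space $G/H$ of dimension~$2$ must have its automorphism group (or $G$) acting algebraically and transitively, which severely constrains the geometry. I expect the right tool to be a rational-invariant or Euler-characteristic / fundamental-group argument: the surface $xz = f(y)$ with $\deg f = d \ge 3$ and $f$ squarefree retracts onto or fibres over $\AA^1$ minus the $d$ roots of $f$ in a way that makes some discrete invariant (e.g. the number of boundary curves, or $\dim H^1$, or the divisor class group computed in Section~\ref{dimn_sec}) too large to be homogeneous under an algebraic group. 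Concretely, for an affine homogeneous space the Picard group or divisor class group is strongly restricted, and the later bound on $\rk\Pic$ of an affine homogeneous space (announced for Section~\ref{dimn_sec}) should show that $\deg f \ge 3$ produces too many components of $\{f=0\}$, contradicting homogeneity.

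For the converse direction of part~(ii), I would exhibit explicit transitive algebraic group actions in the two allowed cases. When $\deg f = 1$, $X \cong \AA^2$ is homogeneous under the translation group $\GG_\mathrm{a}^2$. When $n=1$, $f$ squarefree and $\deg f = 2$, after the change of variables~\eqref{Daniel_change_eq} one may normalise $f(y) = y^2 - 1$ (or $1 - y^2$), so $X = \{xz = y^2-1\}$; this is a smooth affine quadric of dimension~$2$, and such a quadric $\{q = 1\}$ in $\AA^3$ is isomorphic to $\SL_2(\KK)/T$ or to the homogeneous space $\mathrm{SO}_3/\mathrm{SO}_2$-type quotient, hence carries a transitive algebraic group action. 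The cleanest presentation is to identify $\{xz - y^2 = -1\}$ with a homogeneous space for $\SL_2(\KK)$ acting via its standard action on binary quadratic forms (the level set of the discriminant), which is classical; alternatively one writes $X$ as the complement description of $\PP^1\times\PP^1$ minus a diagonal and produces the action by hand. I would then record that this transitive action realises $X$ as $G/H$, completing the ``if'' direction.

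The step I expect to be genuinely hard is the non-homogeneity-as-a-space claim for $n=1$, $\deg f \ge 3$: proving that \emph{no} algebraic group acts transitively, as opposed to the relatively easy statement that $\Aut(X)$ acts transitively. The obstruction must be an intrinsic invariant that distinguishes $G/H$ from a general flexible surface, and my plan is to use the rank of the divisor class group (or equivalently the number of irreducible components of $\{f=0\}$, equal to $\deg f$ when $f$ is squarefree) together with the forthcoming upper bound on $\rk\Pic$ of an affine homogeneous space from Section~\ref{dimn_sec}; if that bound gives $\rk \le 2$ for a surface while $\deg f \ge 3$ forces the class group rank to be at least $\deg f - 1 \ge 2$ strictly exceeding it, the contradiction closes the argument.
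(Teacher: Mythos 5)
Your plan for part (i) breaks down in the case $n \ge 2$, $\deg f \ge 2$, because it rests on a false claim: these surfaces need not be singular. The Jacobian of $xz^n - f(y)$ is $(z^n,\, -f'(y),\, nxz^{n-1})$, so a singular point requires $z = 0$, $f'(y) = 0$, and (from the defining equation) $f(y) = 0$; that is, $X$ is singular precisely when $f$ has a multiple root, regardless of $n$. For instance $\{xz^2 = y^2 - 1\}$ is smooth, yet by Theorem~\ref{Dan_theor}(i) it is not a homogeneous variety. So smoothness cannot detect the failure of homogeneity when $n \ge 2$, and no Jacobian computation will. The paper handles this case with a genuinely hard external input: Makar-Limanov's description of generators of $\Aut(X)$ for $n > 1$ \cite{ML2001}, from which one reads off that the curve $\{z = 0\} \subseteq X$ is invariant under the whole automorphism group, hence $X$ is not homogeneous. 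Without this (or some equivalent computation of $\Aut(X)$), your part (i) is incomplete exactly in the case that needs the most work. Your treatment of the $n=1$ case via Corollary~\ref{susp_An_cor} does match the paper.

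Part (ii) has a second genuine gap: the Picard-rank bound does not close the case $\deg f = 3$. Lemma~\ref{Picdimn_lem} gives $\rk \Pic(X) \le \dim X = 2$ for an affine homogeneous surface, while the Danielewski surface $\{xz = f(y)\}$ with $f$ squarefree has $\Pic(X) = \ZZ^{\deg f - 1}$. For $\deg f \ge 4$ this yields a contradiction, but for $\deg f = 3$ one gets $\Pic(X) = \ZZ^2$, which satisfies the bound; you noticed yourself that you need the rank to strictly exceed $2$, and $\deg f - 1 = 2$ does not. The paper instead uses a stronger tool: the Gizatullin--Popov classification of smooth affine surfaces quasihomogeneous under an algebraic group \cite{Gi1971_alggr, Po1973}, which consists of exactly five surfaces: $\AA^2$, $\AA^1 \times \KK^\times$, $(\KK^\times)^2$, $\PP^2 \setminus C$, and $\PP^1 \times \PP^1 \setminus \diag \PP^1$. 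The two surfaces with a $\KK^\times$-factor are excluded because a smooth flexible variety admits no nonconstant invertible functions; $\AA^2$ and $\PP^2 \setminus C \cong \SL_2/N$ are excluded because their Picard groups are $0$ and $\ZZ/2\ZZ$; and $\PP^1 \times \PP^1 \setminus \diag\PP^1 \cong \SL_2/T$ is precisely the $\deg f = 2$ surface, with $\Pic = \ZZ$ of rank $1 < \deg f - 1$. This classification argument is what disposes of $\deg f = 3$; your proposal as written does not. (Your converse direction for (ii), identifying the $\deg f = 2$ surface with $\SL_2/T$, is the same as the paper's.)
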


\begin{remark}
Statement (i) of Theorem~\ref{Dan_theor} is known, but for convenience of the reader we recall the corresponding references in the proof. 
\end{remark}

We use the following statement to prove Theorem~\ref{Dan_theor}(ii). An algebraic variety is called \emph{quasihomogeneous with respect to an algebraic group} if it admits an action of an algebraic group with an open orbit such that the complement to this orbit is finite. In \cite{Gi1971_alggr}, all smooth irreducible quasihomogeneous with respect to an algebraic group affine surfaces are found. They are the following: 
\begin{enumerate}
  \item \label{surf_a} $\AA^2$;
  \item \label{surf_b} $\AA^1 \times \KK^\times$, where $\KK^\times = \KK \setminus \{0\}$;
  \item \label{surf_v} $(\KK^\times)^2$;
  \item \label{surf_g} $X_1 = \PP^2 \setminus C$, where $C$ is a smooth quadric;
  \item \label{surf_d} $X_2 = \PP^1 \times \PP^1 \setminus \diag \PP^1$, where $\diag$ is the diagonal in the direct product.
\end{enumerate}
In \cite{Po1973}, both smooth and singular irreducible quasihomogeneous with respect to an algebraic group affine surfaces are classified. It is known that the surface $X_1$ in item~\ref{surf_g} is isomorphic to the homogeneous space $\SL_2/N$, and a surface $X_2$ in item~\ref{surf_d} is isomorphic to the homogeneous space $\SL_2/T$, where $T$ is the one-dimensional subgroup of diagonal matrices in $\SL_2$ and $N$ is the normalizer of the torus $T$, see \cite[Lemma~2]{Po1973}. 

\begin{proof}[{Proof of Theorem~\ref{Dan_theor}}] 
If $\deg f = 1$ then $X$ is isomorphic to the affine plane~$\AA^2$ and is a homogeneous space. Hereafter $\deg f \ge 2$. 

(i) In~\cite{ML2001}, generators of the automorphism group of $X$ for $n > 1$ are found:
\begin{gather*}
\xi_\lambda\colon\;\; x \mapsto \lambda^{-1}x, \quad y \mapsto y, \quad z \mapsto \lambda z, \quad\text{ where } \lambda \in \KK^\times; \label{ML_eq1}\\
\theta_q\colon\;\; x \mapsto x + \frac{f(y+z^nq(z)) - f(y)}{z^n}, \quad y \mapsto y + z^nq(z), \quad z \mapsto z, \quad\text{ where } q(z) \in \KK[z]; \label{ML_eq2}\\
x \mapsto \lambda^dx, \quad y \mapsto \lambda y, \quad z \mapsto z, \quad\text{ where } \lambda \in \KK^\times, \quad\text{ if } f(y)=y^d;\\
x \mapsto \mu^dx, \quad y \mapsto \mu y, \quad z \mapsto z, \quad\text{ where }\mu^m = 1, \quad\text{ if } f(y)=y^d p(y^m), \; p(y) \in \KK[y]. 
\end{gather*}
One can notice that the set of points with $z=0$ in $X$ is invariant with respect to $\Aut(X)$, so $X$ is not homogeneous. Note that a generic orbit of the group $\SAut(X)$ is one-dimensional, and the group $\Aut(X)$ has an open orbit $\{z \ne 0\}$. Indeed, automorphisms~$\theta_q$ with $q(z) \in \KK^\times$ connect points with fixed $z \ne 0$, and automorphisms~$\xi_\lambda$ connect these orbits. See also~\cite[Example 2.23]{Du2004} for a relation with Makar-Limanov invariant and Gizatullin surfaces. 

For the case $n=1$ we apply Corollary~\ref{susp_An_cor} and obtain that $X=\{xz = f(y)\}$ is homogeneous if and only if $f$ has no multiple roots. Indeed, $X$ is a suspension over the affine line $Y=\AA^1$ and irreducible components of the subvariety $\{f = 0\} \subseteq \AA^1$ are distinct points. 
Also, transitivity of the action of the group $\Aut(X)$ on~$X$ can be extracted from the description of the automorphism group given in~\cite{ML1990}; see also~\cite[Example~2.23]{Du2004} and~\cite[Example~2.3]{AFKKZ12013} for different proofs of transitivity on an open subset of~$X$. 

(ii) Any homogeneous space is a homogeneous variety, so we consider only $n=1$ and the polynomial $f$ without multiple roots. 

Let us show that the surface $X_1$ in item~\ref{surf_g} from the above list is isomorphic to the Danielewski surface given by the equation
\[xz = y^2 - 1.\]
Since $X_1$ is the homogeneous space $\SL_2/T$, the algebra of regular functions $\KK[X_1]$ equals the algebra of invariants $\KK[\SL_2]^T$ with respect to the action of the torus~$T$ on $\SL_2$ by right multiplication. Since
\[\begin{pmatrix}a & b \\c & d\end{pmatrix}\begin{pmatrix}t & 0 \\ 0 & t^{-1}\end{pmatrix} = 
\begin{pmatrix}at & bt^{-1} \\ ct & dt^{-1}\end{pmatrix},\]
we see that the algebra $\KK[\SL_2]^T$ is generated by monomials $ab, ad, bc, cd$. Denote 
\[x = ab, \quad z = cd, \quad y = \frac{ad + bc}{2}.\]
Taking into account the equation $ad - bc = 1$, we obtain \[\KK[\SL_2]^T = \KK[ab, ad, bc, cd] = \KK[x, y+1, y-1, z].\] So, the algebra of regular functions of the surface $X_1 = \SL_2/T$ is isomorphic to the quotient algebra $\KK[x,y,z] / (xz - (y-1)(y+1))$ as claimed. Since the degree of the polynomial $y^2-1$ equals 2 and all polynomials of degree 2 are equivalent with respect to transformations~\eqref{Daniel_change_eq}, all Danielewski surfaces with $\deg f = 2$ are homogeneous spaces from item~\ref{surf_g}. 

The equation $xz = f(y)$ with the polynomial $f(y)$ of degree~$1$ is isomorphic to the affine plane $\AA^2$ in item~\ref{surf_a}, so all Danielewski surfaces with $\deg f = 1$ are homogeneous spaces. 

Notice that a smooth flexible variety~$X$ admits no nonconstant invertible regular functions since it is the image of an affine space via the orbit map $H_1 \times H_2 \times \ldots \times H_m \to X$ for some $\GG_\mathrm{a}$-subgroups $H_1, H_2, \ldots, H_m$ in $\Aut(X)$, see \cite[Proposition 1.5]{AFKKZ12013}. It follows that a homogeneous Danielewski surface $X$ is not isomorphic to the surfaces~\ref{surf_b} and~\ref{surf_v}. 

Let us show that the surface $X_2 = \SL_2/N$ in item~\ref{surf_d} is not isomorphic to any Danielewski surface. 
Since $\Pic(\SL_2) = 0$, the Picard group $\Pic(X_2)$ equals the character group $\fX(N) = \ZZ/2\ZZ$ of $N$ by~\cite[Corollary of Theorem~4]{Po1974}. It is sufficient to prove that the Picard group of any Danielewski surface $X = \{xz = f(y)\}$ equals~$\ZZ^d$, where $d = \deg f - 1$, see also~\cite[Example~3.3]{Ar2023}. 
Denote $f(y) = \alpha(y-y_0)\ldots(y-y_d)$, $\alpha,y_i \in \KK$. Let 
\[h = \frac{x}{\alpha(y-y_1)\ldots(y-y_d)} = \frac{y - y_0}{z} \in \KK(X).\]
Consider the divisor $D = \{z = 0, \, y = y_1\} \cup \ldots \cup \{z = 0, \, y = y_d\}$. Notice that $h$ is regular on the open subset $U = X \setminus D$, which is affine by~\cite[Lemma~3.3]{Na1962}. Moreover, $x = h\alpha(y-y_1)\ldots(y-y_d)$ and $y = y_0 + hz$ on~$U$, so the algebra $\KK[U]$ is generated by $h$ and~$z$. This implies, together with $\dim U = 2$, that $U \cong \AA^2$. It is easy to see that if an open subset~$U$ of a smooth variety $X$ is isomorphic to an affine space, then the group $\Pic(X)$ is freely generated by classes of prime divisors of $D = X \setminus U$. In our case, subvarieties $\{z = 0, \, y = y_i\} \subseteq X$, $1 \le i \le d$, are isomorphic to $\AA^1$ and whence are prime components of $D$, so $\Pic(X) = \ZZ^d$. 

To sum up, we obtain the following in the case $n=1$. If $\deg f = 1$, then $X$ is the homogeneous space~$\AA^2$. If $\deg f = 2$, then $X$ is the homogeneous space $\SL_2/T$. If $\deg f \ge 3$, then $X$ is not isomorphic to surfaces in items~\ref{surf_a}--\ref{surf_d}. Thus, the surface $X$ is a homogeneous variety but not a quasihomogeneous variety with respect to an algebraic group, in particular, it is not a homogeneous space. 
\end{proof}

\begin{example} \label{Dan_deg3_ex}
According to Theorem~\ref{Dan_theor}, the Danielewski surface $xz^n = f(y)$ is a homogeneous variety but not a homogeneous space if and only if $n=1$ and $\deg f \ge 3$. For example, 
\[\{xz = y(y+1)(y+a)\} \subseteq \AA^3,\]
$a \in \KK \setminus\{0, 1\}$, is a class of such surfaces. The surfaces with parameters $a,b$ are not isomorphic if $b \ne a, 1-a, \frac{1}{a}, \frac{a-1}{a}, \frac{a}{a-1}, \frac{1}{1-a}$, see equation~\eqref{Daniel_change_eq}. So we have infinitely many pairwise non-isomorphic examples. 
\end{example}

Let us recall that a normal affine surface $X$ is a \emph{Gizatullin surface} if the automorphism group $\Aut(X)$ acts on $X$ with an open orbit such that the complement to this orbit is a finite set. It is proved in~\cite{Gi1971} in the smooth case and in~\cite{Du2004} in the normal case that a normal affine surface $X$ that is not isomorphic to $(\KK^{\times})^2$ is a Gizatullin surface if and only if $X$ admits a smooth compactification by a smooth zigzag $D$. The latter means that $X=Y\setminus D$, where $Y$ is a complete surface smooth along $D$ and $D$ is a linear chain of smooth rational curves with simple normal crossings.

Smooth Gizatullin surfaces were conjectured to be homogeneous varieties, see \cite[Conjecture~1]{Gi1971}. However, counter-examples were found in~\cite{Ko2015}. Danielewski surfaces of the form $xz=f(y)$ that we use in this paper are examples of Gizatullin surfaces. The question which smooth Gizatullin surfaces are homogeneous is still open. 

A smooth affine surface $X$ is a \emph{Danilov-Gizatullin surface} if $X=Y\setminus D$, where $Y$ is a complete smooth surface and $D$ is a smooth irreducible rational curve.
From~\cite{Giz} we know that every Danilov-Gizatullin surface is a homogeneous variety. 

It is shown in~\cite{GD} that every Danilov-Gizatullin surface $X$ is either $\PP^2\setminus H$, where $H$ is a line, or $\PP^2\setminus C$, where $C$ is a conic, or $\FF_a\setminus S$, where $\FF_a$ is the Hirzebruch surface and $S$ is an ample divisor in $\FF_a$. In the first case we denote $X\cong \AA^2$ by $V'$. In the second case we use notation $V''=\PP^2\setminus C$. Finally, let us say that for $n\ge 2$ the $n$th Danilov-Gizatullin surface is the affine surface $V_n :=\FF_a\setminus S$, where $S\subseteq\FF_a$ is an ample divisor in the Hirzebruch surface $\FF_a$ with the intersection index $n:=(S,S)$. The isomorphism class of $V_n$ indeed depends only on $n$; see \cite[Theorem~5.8.1]{GD} or~\cite{FKZ-1}. Moreover, for different $n$ the surfaces $V_n$ are not isomorphic, see, e.g.,~\cite[Remark~2.10.2]{FKZ-1}.

Consider the affine hypersurface $F_n:=V(T_1T_4-T_2^{n-1}T_3-1)\subseteq\AA^4$, where $n\ge 2$. It is a smooth affine factorial 3-fold with $\KK[F_n]^{\times}=\KK^{\times}$, which is the spectrum of the Cox ring of~$V_n$; see~\cite[Exercise~4.18]{ADHL}. In the framework of the Cox construction, the variety $V_n$ is obtained as the geometric quotient of $F_n$ with respect to the free action of the one-dimensional torus
$$
t\cdot z=(t^{-1}z_1,tz_2,t^{1-n}z_3,tz_4); 
$$
see~\cite[Proposition~2.1]{Don} and~\cite[Exercise~4.18]{ADHL} for details. In particular, the spectrum of the Cox ring of the surface $V_n$ has dimension three. 

We conclude that the only Danilov-Gizatullin surfaces that are homogeneous spaces are $V'$, $V''$ and $V_2$. Indeed, we already know that $V'\cong\AA^2$ and $V''\cong\SL_2/N$. It follows from the classification of smooth irreducible quasihomogeneous with respect to an algebraic group affine surfaces that the only such surface with 3-dimensional spectrum of the Cox ring is $SL_2/T$. Indeed, the spectrum of the Cox ring of $\AA^2$ is $\AA^2$ and the spectrum of the Cox ring of $\SL_2/N$ is $\SL_2/T$; see \cite[Example~4.5.1.13]{ADHL}. Finally, the varieties $\AA^1\times\KK^{\times}$ and $(\KK^{\times})^2$ have non-constant invertible regular functions and the Cox ring is not defined in this situation. It is easy to see, e.g. from the Cox construction, that the variety $V_2$ is isomorphic to $SL_2/T$. 

As a result, we obtain one more series $V_n$, $n\ge 3$, of affine homogeneous varieties that are not homogeneous spaces. 

\section{Homogeneous varieties of higher dimension}
\label{dimn_sec}

In this section we provide a wide class of homogeneous varieties of an arbitrary dimension that are not homogeneous spaces. 

Recall that a polynomial $p \in \KK[x_1, \ldots, x_n]$ is called a \emph{variable} if there exist polynomials $p_2, \ldots, p_n \in \KK[x_1, \ldots, x_n]$ such that $\KK[x_1, \ldots, x_n] = \KK[p, p_2, \ldots, p_n]$. 

\begin{theorem} \label{dimn_theor}
Let a smooth hypersurface $X \subseteq \AA^{n+1}$ be given by an equation of the form
\begin{equation} \label{dimn_eq}
uv = p_0(\yu)\ldots p_d(\yu),
\end{equation}
where $u$, $v$, and $\yu = (y_1, \ldots, y_{n-1})$ denote coordinates in $\AA^{n+1}$, $p_i \in \KK[\yu]$ are irreducible polynomials, and $p_0$ is a variable in $\KK[\yu]$. Then 
\begin{enumerate}
  \item $\Pic(X) = \ZZ^d$;
  \item if $d > n$ then $X$ is a homogeneous variety that is not a homogeneous space. 
\end{enumerate}
\end{theorem}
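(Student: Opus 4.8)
The plan is to treat the two parts separately, using the Danielewski-surface argument from Section~\ref{dim2_sec} as the model for part~(1), and combining part~(1) with a Picard-rank obstruction for homogeneous spaces to obtain part~(2).

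For part~(1), I would imitate the computation of $\Pic$ of the Danielewski surface $xz = f(y)$ given in the proof of Theorem~\ref{Dan_theor}(ii). The key idea there is to exhibit an open subset $U \subseteq X$ isomorphic to an affine space, so that $\Pic(X)$ is freely generated by the classes of the prime divisors in the complement $X \setminus U$. Here the role of the distinguished linear factor is played by the hypothesis that $p_0$ is a \emph{variable} in $\KK[\yu]$: after the change of coordinates making $p_0$ one of the free generators, the equation becomes $uv = p_0 \cdot g(\yu)$ with $g = p_1 \ldots p_d$. I would set $h = u / g(\yu) = p_0(\yu) / v$ and take $U = X \setminus D$, where $D$ is the union of the divisors cut out by $v = 0$ together with $p_i = 0$ for $1 \le i \le d$. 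On $U$ one has $u = h\,g(\yu)$ and $p_0 = h v$, so $\KK[U]$ is generated by $h, v$ and the remaining free coordinates; combined with $\dim U = n$ this should give $U \cong \AA^n$, hence $\Pic(U) = 0$. The remaining point is to check that the $d$ divisors $\{v = 0,\ p_i = 0\}$, $1 \le i \le d$, are distinct and irreducible, so that $\Pic(X) = \ZZ^d$. I would justify affineness of $U$ as in the surface case via~\cite[Lemma~3.3]{Na1962}.

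For part~(2), the strategy is: since $X$ is a smooth suspension $\Susp(\AA^n, p_0\ldots p_d)$ over the flexible variety $\AA^n$, Proposition~\ref{homog_prop} (equivalently Corollary~\ref{iter_susp_homog_cor}) shows that $X$ is a homogeneous variety. The nontrivial half is showing $X$ is \emph{not} a homogeneous space when $d > n$. I would invoke the promised upper bound on the Picard rank of an affine homogeneous space (the result the introduction of Section~\ref{dimn_sec} advertises as giving $\rk \Pic \le \dim X$, or some comparable bound in terms of the dimension). Since $\dim X = n$ and part~(1) gives $\rk \Pic(X) = d > n$, the variety $X$ cannot carry a transitive algebraic group action, completing the proof.

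The main obstacle I anticipate is in part~(1): verifying carefully that $U \cong \AA^n$ rather than merely an open subset of one, and that $X \setminus U$ consists of exactly $d$ prime divisors with independent classes. The argument that $\KK[U]$ is generated by $h$ together with the free coordinates needs the variable hypothesis on $p_0$ essentially, and one must confirm there are no additional prime components of $X \setminus U$ coming from the locus $v = 0$ beyond those accounted for; smoothness of $X$ forces the factors $p_i$ to be distinct and their zero loci to be disjoint and smooth (by Corollary~\ref{susp_An_cor}), which is exactly what guarantees the irreducibility and distinctness of the boundary divisors and hence the freeness of the generators.
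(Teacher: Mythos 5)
Your proposal is correct and follows essentially the same route as the paper's proof: the change of coordinates making $p_0$ a free generator, the rational function $h = u/(p_1\ldots p_d) = p_0/v$, affineness of $U = X \setminus D$ via Nagata's lemma, the identification $U \cong \AA^n$ forcing $\Pic(X)$ to be freely generated by the $d$ boundary divisors, and then Proposition~\ref{homog_prop} combined with the bound $\rk\Pic \le \dim$ for affine homogeneous spaces (the paper's Lemma~\ref{Picdimn_lem}). The only slip is cosmetic: $X$ is a suspension over $\AA^{n-1}$, not $\AA^n$, since $\yu = (y_1,\ldots,y_{n-1})$.
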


\begin{remark}
By Corollary~\ref{susp_An_cor} the hypersurface $X$ given by equation~\eqref{dimn_eq} is smooth if and only if $p_i \ne p_j$ for $0 \le i \ne j \le d$ and the subvarieties $\{p_i = 0\} \subseteq \AA^{n-1}$, are smooth and do not intersect pairwise. 
\end{remark}

For the proof we need the following bound on the rank of the Picard group of homogeneous spaces. 

\begin{lemma} \label{Picdimn_lem}
Let $X$ be an affine homogeneous space. Then $\rk \Pic(X) \le \dim X$. 
\end{lemma}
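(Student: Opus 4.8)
The plan is to use the structure theory of affine homogeneous spaces to bound the rank of the Picard group by the dimension. Write $X = G/H$ for a connected affine algebraic group $G$ acting transitively on $X$ with stabilizer $H$. Since $X$ is affine, a theorem of Matsushima tells us that $H$ is reductive. The key object to control $\Pic(X)$ is the character group of $H$, and the main structural fact I would invoke is that for $X = G/H$ there is a natural description of $\Pic(X)$ in terms of characters and of $\Pic(G)$.

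\begin{proof}[Proof proposal]
Write $X = G/H$ where $G$ is an affine algebraic group acting transitively on $X$ and $H$ is the stabilizer of a point; since $X$ is affine, $H$ is reductive by Matsushima's criterion. First I would reduce to the case where $G$ is connected: passing to the connected component $G^\circ$ replaces $X$ by a finite disjoint union of copies of $G^\circ/(G^\circ\cap H)$, which does not decrease the Picard rank of a single component and does not change the dimension, so it suffices to bound $\rk\Pic(G^\circ/H^\circ)$. Next I would recall the exact relationship between the Picard group of a homogeneous space and the characters of the stabilizer: by the result of Popov used already in the proof of Theorem~\ref{Dan_theor} (see \cite[Corollary of Theorem~4]{Po1974}), there is an exact sequence relating $\fX(H)$, $\Pic(X)$, and $\Pic(G)$; in particular $\Pic(X)$ is a quotient (up to finite index and the contribution of $\Pic(G)$) of the character group $\fX(H)$, so $\rk\Pic(X) \le \rk\fX(H)$.

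The main step is then to bound $\rk\fX(H)$ by $\dim X$. Since $H$ is reductive, its character group $\fX(H)$ has rank equal to the dimension of a maximal central torus $Z$ of $H$, equivalently the rank of the center of $H$ modulo finite groups; in any case $\rk\fX(H) \le \rk H \le \dim H$, but this crude bound is too weak because $\dim H$ can exceed $\dim X$. The sharper observation I would use is that characters of $H$ detect a central torus acting on the fiber, and the rank of this central torus is at most $\dim(G/H) = \dim X$ whenever $G$ is chosen with semisimple (or trivial-character) part acting effectively. Concretely, I expect the clean bound to come from the fact that $\rk\fX(H)$ equals the dimension of the maximal torus quotient of $H$, and that a torus quotient of the stabilizer corresponds to a torus acting freely in the relevant quotient, whose orbits embed into $X$; this forces $\rk\fX(H)\le\dim X$.

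The hard part will be making the last inequality $\rk\fX(H)\le\dim X$ precise and choosing the group $G$ appropriately. One must avoid the situation where $H$ carries many characters coming from central tori of $G$ that do not contribute to $\Pic(X)$; this is exactly what the Popov exact sequence handles, since characters of $G$ restricting trivially drop out. The cleanest route is probably to replace $G$ by its derived subgroup times a central torus, reduce to the case $\fX(G)=0$ (so that $\Pic(G)$ is finite and characters of $H$ surject onto $\Pic(X)$ up to finite index), and then bound the rank of the image torus of $H$ directly by the dimension of the quotient $X=G/H$. I would verify that under this reduction the relevant torus of $H$ acts with finite kernel on a transversal to the orbit, giving an embedding whose image has dimension at least $\rk\fX(H)$ inside $X$, hence $\rk\fX(H)\le\dim X$ and therefore $\rk\Pic(X)\le\dim X$.
\end{proof}
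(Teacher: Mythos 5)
Your proposal follows the same skeleton as the paper (relate $\Pic(X)$ to the character group $\fX(H)$ of the stabilizer via Popov's results, then bound the character rank by a torus of dimension at most $\dim X$), but the step you yourself call ``the hard part'' --- the inequality $\rk\fX(H)\le\dim X$ --- is precisely where the content of the lemma lies, and the mechanism you sketch for it does not work. First, you argue with a torus \emph{quotient} of $H$: a quotient of $H$ does not act on $X=G/H$ at all; only subgroups of $G$ do. The torus that does the job is the \emph{subgroup} $T$ in the decomposition $H^0=(T\cdot S)\rightthreetimes H^{\mathrm{u}}$: since semisimple and unipotent groups have no characters, $\fX(H^0)$ injects into $\fX(T)$, and $\fX(H)\to\fX(H^0)$ has finite kernel, so $\rk\fX(H)\le\dim T$. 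Second, this $T$ certainly does not act freely --- it fixes the base point $eH$ --- so ``orbits embed into $X$'' cannot be the argument. The correct fact, and the reason the paper cites Demazure, is that a torus acting on an irreducible variety with \emph{finite kernel of non-effectivity} has generic orbits of dimension $\dim T$, whence $\dim T\le\dim X$.

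More seriously, the inequality $\rk\fX(H)\le\dim X$ is simply false for an arbitrary presentation $X=G/H$, and your proposed normalization (``reduce to the case $\fX(G)=0$'') cannot be carried out. Take $G=\SL_2\times(\KK^\times)^N$ acting on $X=\SL_2/T_0$ through the first factor; then $H=T_0\times(\KK^\times)^N$, so $\rk\fX(H)=N+1$ while $\dim X=2$. The excess characters come from the non-effectivity of the action, and they cannot be removed by shrinking to a group with trivial character group: no connected algebraic group with $\fX(G)=0$ acts transitively on $X=\KK^\times$ (transitivity would force a surjection $G\to\KK^\times$, i.e.\ a nontrivial character), yet $\KK^\times$ is an affine homogeneous space. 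The normalization that works is the paper's: first replace $G$ by its quotient by the kernel of the action, so that the action is effective; then apply Popov's isogeny theorem to pass to a central isogeny with trivial Picard group --- this keeps the kernel of non-effectivity finite, gives $\Pic(X)\cong\fX(H)$ exactly, and guarantees that the torus $T\subseteq H$ acts on $X$ with finite kernel, so Demazure's bound closes the chain $\rk\Pic(X)=\rk\fX(H)\le\dim T\le\dim X$. Your draft never secures this (almost-)effectiveness, so the argument does not go through. Two smaller points: Matsushima's criterion presupposes $G$ reductive, so you cannot invoke it for an arbitrary affine $G$ (and reductivity of $H$ is never needed); and since $X$ is irreducible, $G^{\circ}$ already acts transitively, which is the clean way to reduce to connected $G$ --- the ``finite disjoint union of copies'' reduction would in any case not bound the Picard rank of a disconnected union by the dimension.
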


\begin{proof}
One can assume that $X = \widehat G/\widehat H$, where $\widehat G$ acts on~$X$ effectively. Then $\widehat G$ is an affine algebraic group according to \cite[Corollary 3.2.2]{Br2017}. Moreover, we can assume that $\widehat G$ is connected. According to \cite[Theorem~3]{Po1974} there exists a central isogeny $G \to \widehat G$ with $\Pic(G) = 0$. Thus, $X = G/H$, where $G$ is an affine algebraic group such that $\Pic(G) = 0$ and~$G$ acts on~$X$ with a finite kernel of non-effectivity. 
By \cite[Corollary of Theorem~4]{Po1974}, the condition $\Pic(G)=0$ implies that the group $\Pic(X)$ equals the character group $\fX(H)$ of the subgroup~$H$. 

Denote by $H^0$ the connected component of~$H$. Since $H^0$ is a connected affine algebraic group, we have a decomposition $H^0 = (T \cdot S) \rightthreetimes H^\mathrm{u}$, where $T$ is a torus, $S$ is a semisimple group, $H^\mathrm{u}$ is the unipotent radical of $H^0$ and $\cdot$ denotes the almost direct product, see~\cite[Section 6, Theorem 4]{OV}. Since semisimple and unipotent groups have no characters, the homomorphism of restriction $\phi\colon\fX(H^0) \to \fX(T)$ is injective. 

Let us show that the restriction homomorphism $\psi\colon\fX(H) \to \fX(H^0)$ has a finite kernel. Indeed, characters of $H$ that are trivial on $H^0$ are characters of the finite group $H/H^0$. 

So $\phi\circ\psi\colon \fX(H) \to \fX(T)$ has a finite kernel, whence $\rk \fX(H) \le \rk \fX(T) = \dim T$. Since the torus $T \subseteq G$ acts on $X$ with a finite kernel of non-effectivity, the factor of $T$ by this finite subgroup is a torus $\widehat T$ of the same dimension acting effectively on~$X$. By~\cite[Corollary~1]{De1970}, we have $\dim \widehat T \le \dim X$. Thus, $\rk \Pic(X) = \rk \fX(H) \le \dim T = \dim \widehat T \le \dim X$. 
\end{proof}

\begin{proof}[Proof of Theorem~\ref{dimn_theor}]
(a) After a change of variables in $\KK[\yu]$ we can assume that $f(\yu)=y_1p_1(\yu)\ldots p_d(\yu)$, where $\yu = (y_1, \ldots,y_{n-1})$. Let
\[h = \frac{u}{p_1(\yu)\ldots p_d(\yu)} = \frac{y_1}{v} \in \KK(X).\]
Consider the divisor $D = \{v = 0, \, p_1 = 0\} \cup \ldots \cup \{v = 0, \, p_d = 0\}$. The function $h$ is regular on the open subset $U = X \setminus D$, which is an affine variety according to~\cite[Lemma~3.3]{Na1962}. Note $u = hp_1(\yu)\ldots p_d(\yu)$ and $y_1 = hv$ on~$U$, whence the algebra $\KK[U]$ is generated by $n$ functions $h, v, y_2 \ldots, y_{n-1}$. Since $\dim U = n$ it follows that $U \cong \AA^n$. Then~$\Pic(X)$ is freely generated by classes of prime divisors of $D = X \setminus U$. Since $\{v = 0, \, p_i = 0\} \subseteq X$ is isomorphic to the irreducible variety $\{p_i = 0\} \subseteq \AA^{n-1}$, $1 \le i \le d$, we have $\Pic(X) = \ZZ^d$. 

\medskip

(b) According to Proposition~\ref{homog_prop} the variety $X = \Susp(\AA^{n-1}, p_0\ldots p_d)$ is homogeneous. If $d > n$ then $X$ is not homogeneous space by item~(a) and Lemma~\ref{Picdimn_lem}. 
\end{proof}

\begin{example}
Let $X$ be the direct product of the affine space $\AA^{n-2}$ and a Danielewski surface $Y = \{xz = f(y)\} \subseteq \AA^3$ with $\deg f = n+2$, where $f$ has no multiple roots. Then $\Pic(X) = \Pic(Y \times \AA^{n-2}) = \Pic(Y) = \ZZ^{n+1}$ according to~\cite[Chapter~3, Section 1]{Sh2013} and the proof of Theorem~\ref{Dan_theor}. Since $\dim X = n$, the variety $X$ is not a homogeneous space by Lemma~\ref{Picdimn_lem}. This example belongs to the class obtained in Theorem~\ref{dimn_theor}(b). 
\end{example}

\begin{example}
Consider the hypersurface
\[X = \{uv = (x+y^2)(xy+y^3+1)(xy+y^3+2)(xy+y^3+3)(xy+y^3+4)\} \subseteq \AA^4.\]
Denote $p_0 = x+y^2$, $p_i = xy+y^3+i$, $1 \le i \le 4$. Since $\KK[x+y^2, y, u, v] = \KK[x,y,u,v]$, the polynomial~$p_0$ is a variable. Notice that $p_i$, $1 \le i \le 4$, are not variables since $p_i-i = (x+y^2)y$ is reducible. Any curve $\{p_i = 0\} \subseteq \AA^2$ is smooth and for any $i \ne j$ subvarieties $\{p_i = 0\}$ and $\{p_j = 0\}$ do not intersect pairwise, $0 \le i,j \le 4$. So by Corollary~\ref{susp_An_cor} $X$ is smooth. According to Theorem~\ref{dimn_theor}, we have $\Pic(X) = \ZZ^4$, and $X$ is a homogeneous variety but not a homogeneous space as $4 > 3$. 
\end{example}


\end{document}